\documentclass[12pt]{article}

\usepackage{color}

\usepackage{amssymb,amsthm,amsmath}
\usepackage{latexsym}
\usepackage{graphicx}
\usepackage{url}
\usepackage{fullpage}

\newtheorem{theorem}{Theorem}
\newtheorem{lemma}[theorem]{Lemma}
\newtheorem{claim}[theorem]{Claim}

\newtheorem{cor}[theorem]{Corollary}

\newtheorem{prop}[theorem]{Proposition}


%
%

\newcommand\ex{\ensuremath{\mathrm{ex}}}

\title{Counting copies of a fixed subgraph in $F$-free graphs}

\date{}

\author{
D\'aniel Gerbner\thanks{Hungarian Academy of Sciences, Alfr\'ed R\'enyi Institute of Mathematics, P.O.B. 127, Budapest H-1364, Hungary. e-mail: gerbner.daniel@renyi.mta.hu.}
\and
Cory Palmer\thanks{Department of Mathematical Sciences,
University of Montana, Missoula, Montana 59812, USA. e-mail: cory.palmer@umontana.edu}
}

\begin{document}

\maketitle

\let\thefootnote\relax\footnote{\textcopyright \hspace{.2em} 2019. This manuscript version is made available under the CC-BY-NC-ND 4.0 license \url{http://creativecommons.org/licenses/by-nc-nd/4.0/}}

\begin{abstract}
Fix graphs $F$ and $H$ and let $\ex(n,H,F)$ denote the maximum possible number of copies of the graph $H$ in an $n$-vertex $F$-free graph. The systematic study of this function was initiated by Alon and Shikhelman [{\it J. Comb. Theory, B}. {\bf 121} (2016)].
In this paper, we give new general bounds concerning this generalized Tur\'an function. We also determine $\ex(n,P_k,K_{2,t})$ (where $P_k$ is a path on $k$ vertices) and $\ex(n,C_k,K_{2,t})$ asymptotically for every $k$ and $t$. For example, it is shown that for $t \geq 2$ and $k\geq 5$ we have 	$\ex(n,C_k,K_{2,t})=\left(\frac{1}{2k}+o(1)\right)(t-1)^{k/2}n^{k/2}$.
We also characterize the graphs $F$ that cause the function $\ex(n,C_k,F)$ to be linear in $n$. In the final section we discuss a connection between the function $\ex(n,H,F)$ and Berge hypergraph problems.

\end{abstract}

	\noindent
	{\bf Keywords:} Tur\'an numbers, generalized Tur\'an numbers
	
	\noindent
	{\bf AMS Subj.\ Class.\ (2010)}: 05C35, 05C38

\section{Introduction}

Let $G$ and $F$ be graphs. We say that a graph $G$ is $F$-{\emph{free} if it contains no copy of $F$ as a subgraph. 
 Following Alon and Shikhelman \cite{alonsik}, let us denote the maximum number of copies of the graph $H$ in an $n$-vertex $F$-free graph by
\[
\ex(n,H,F).
\]
The case when $H$ is a single edge is the classical Tur\'an problem of extremal graph theory. 
In particular, the
 \emph{Tur\'an number} of a graph $F$ is the maximum number of edges possible in an $n$-vertex $F$-free graph $G$. This parameter is denoted $\ex(n,F)$ and thus $\ex(n,K_2,F) = \ex(n,F)$. For more on the ordinary Tur\'an number see, for example, the survey \cite{fusi}. Recall that the Tur\'an graph $T_{k-1}(n)$ is the complete $(k-1)$-partite graph with $n$ vertices such that the vertex classes are of size as close to each other as possible.

In 1949, Zykov \cite{zykov} proved that the Tur\'an graph $T_{k-1}(n)$ is the unique graph containing the maximum possible number of copies of $K_t$ in an $n$-vertex $K_{k}$-free graph (when $t<k$). By counting copies of $K_t$ in $T_{k-1}(n)$
we get the following corollary for $\ex(n,K_t,K_k)$. 
Let $\mathcal{N}(H,G)$ denote the number of copies of the subgraph $H$ in the graph $G$.

\begin{cor}[Zykov, \cite{zykov}]\label{erdos}
If $t < k$, then
\[\ex(n,K_t,K_k) = \mathcal{N}(K_t,T_{k-1}(n)) =\binom{k-1}{t}\left(\frac{n}{k-1}\right)^t + o(n^t).\]
\end{cor}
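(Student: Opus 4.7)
The statement has two parts: first, that $T_{k-1}(n)$ achieves $\ex(n,K_t,K_k)$, and second, the asymptotic count. I would attack them in that order.

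For the extremal part, the plan is to use Zykov symmetrization. Fix a $K_k$-free graph $G$ on $n$ vertices maximizing $\mathcal{N}(K_t,G)$. Given two non-adjacent vertices $u,v$, let $f(x)$ denote the number of copies of $K_t$ in $G$ containing $x$. Since $u \not\sim v$, no copy of $K_t$ contains both, so the number of $K_t$ not through $\{u,v\}$ is unchanged if we perform the following \emph{symmetrization}: delete all edges at $v$ and make $v$ a twin of $u$ (so $N(v) := N(u)$). The resulting graph has $f(u)$ copies of $K_t$ through $v$ instead of $f(v)$, and it remains $K_k$-free since any new $K_k$ through $v$ would, after swapping $v$ back for $u$, yield a $K_k$ through $u$ in the original graph. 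Choosing whichever of $u,v$ has the larger $f$-value, we do not decrease the $K_t$-count. Iterating until ``non-adjacent'' becomes an equivalence relation shows we may assume $G$ is a complete multipartite graph. The $K_k$-free condition then forces at most $k-1$ parts.

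Next, among complete $(k-1)$-partite graphs on $n$ vertices with part sizes $n_1,\dots,n_{k-1}$, the $K_t$-count equals the elementary symmetric polynomial
\[
\mathcal{N}(K_t,G) = \sum_{S \in \binom{[k-1]}{t}} \prod_{i \in S} n_i.
\]
A standard swapping argument shows this is maximized when the $n_i$ are as balanced as possible: if $n_i \geq n_j + 2$, moving one vertex from class $i$ to class $j$ strictly increases the sum (the partial derivative with respect to $n_i - n_j$ has a definite sign). Hence the Tur\'an graph $T_{k-1}(n)$ is the unique maximizer, giving the first equality.

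For the count, in $T_{k-1}(n)$ each class has size $\lfloor n/(k-1)\rfloor$ or $\lceil n/(k-1)\rceil$, and every $K_t$ arises by choosing $t$ of the $k-1$ classes and one vertex from each. The leading term is therefore $\binom{k-1}{t}(n/(k-1))^t$, with the floor/ceiling discrepancy contributing only lower-order $O(n^{t-1})$ terms, which is absorbed into $o(n^t)$.

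The main obstacle I anticipate is verifying cleanly that symmetrization never decreases the $K_t$-count (the bookkeeping of copies through $u$ versus through $v$, using the non-adjacency of $u$ and $v$) and that $K_k$-freeness is genuinely preserved; everything after that reduction is a routine convexity/counting exercise.
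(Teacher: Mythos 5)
Your proposal is correct in substance, but it takes a different (and more self-contained) route than the paper: the paper offers no proof of the extremal statement at all --- it simply invokes Zykov's theorem that $T_{k-1}(n)$ uniquely maximizes the number of $K_t$'s among $n$-vertex $K_k$-free graphs, so the paper's entire ``proof'' of the corollary is the final counting step $\mathcal{N}(K_t,T_{k-1}(n))=\binom{k-1}{t}\left(\frac{n}{k-1}\right)^t+O(n^{t-1})$, which you reproduce correctly. What you add is a full symmetrization proof of the cited theorem, and your accounting of the $K_t$-count under symmetrization is right: since $u\not\sim v$, no copy of $K_t$ (for $t\ge 2$; the case $t=1$ is trivial) uses both vertices, so replacing $v$ by a non-adjacent twin of $u$ changes the count by $f(u)-f(v)$ and preserves $K_k$-freeness by the replacement argument you give. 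The one step that is thinner than it looks is ``iterating until non-adjacency becomes an equivalence relation'': a single symmetrization at a maximizer only yields $f(u)=f(v)$ for non-adjacent pairs, which does not by itself make $G$ complete multipartite. The standard fix is to handle a non-transitive triple $u\sim v$ with $w\not\sim u$, $w\not\sim v$: if $f(w)<\max\{f(u),f(v)\}$ symmetrize $w$; otherwise symmetrize both $u$ and $v$ into copies of $w$, and a secondary potential (e.g.\ maximizing $\sum_x f(x)$ or the number of edges among maximizers) guarantees termination. Also, your claim that the balancing swap \emph{strictly} increases the elementary symmetric polynomial fails in degenerate cases (e.g.\ when enough parts are empty that $e_{t-2}$ of the remaining sizes vanishes), though the weak inequality always holds and suffices for the asymptotic statement of the corollary. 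Your approach buys a proof from first principles; the paper's buys brevity by outsourcing exactly the part your sketch still leaves slightly informal.
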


This result was rediscovered by Erd\H os \cite{Er1} and also follows from a theorem of Bollob\'as \cite{Bo} and the case $t=3$ and $k=4$ was known to Moon and Moser \cite{MoMo}.
Another proof appears in Alon and Shikhelman \cite{alonsik} modifying a proof of Tur\'an's theorem.

When $H$ is a pentagon, $C_5$, and $F$ is a triangle, $K_3$, the determination of $\ex(n,C_5,K_3)$ is a well-known conjecture of Erd\H os \cite{erpentvtr}. An upper bound of $1.03(\frac{n}{5})^5$ was proved by Gy\H ori \cite{Gy}. The blow-up of a $C_5$ gives a lower-bound of $(\frac{n}{5})^5$ when $n$ is divisible by $5$.
Hatami, Hladk\'y, Kr\'al', Norine and Razborov \cite{HaETAL} and independently Grzesik \cite{G2012} proved 
\begin{equation}\label{pentagons}
\ex(n,C_5,K_3) \leq \left(\frac{n}{5}\right)^5.
\end{equation}

Swapping the role of $C_5$ and $K_3$, we count the number of triangles in a pentagon-free graph. Bollob\'as and Gy\H ori \cite{BoGy} determined 

\[ (1+o(1))\frac{1}{3\sqrt{3}}n^{3/2} \leq \ex(n,K_3,C_5) \leq (1+o(1))\frac{5}{4}n^{3/2}.\]

The constant in the upper bound was improved to $\frac{\sqrt{3}}{3}$ by Alon and Shikhelman \cite{alonsik} and by Ergemlidze, Gy\H ori, Methuku and Salia \cite{C5C3}. 
 Gy\H ori and Li \cite{LiGy} give bounds on $\ex(n, K_3, C_{2k+1})$. A particularly interesting case to determine
the value of $\ex(n,K_3,K_{r,r,r})$ was posed by Erd\H os \cite{Er2} (second part of Problem 17) and remains open in general.

The systematic study of the function $\ex(n,H,F)$ was initiated by Alon and Shikhelman \cite{alonsik} who proved a number of different bounds. Two examples from their paper are as follows. An analogue of the K\H ov\'ari-S\'os-Tur\'an theorem
\[ \ex(n,K_3,K_{s,t}) = O(n^{3-3/s})\]
which is shown to be 
 sharp in the order of magnitude when $t > (s-1)!$ (see also \cite{KMV}).

Another example is an Erd\H os-Stone-Simonovits-type result that for fixed integers $t < k$ and a $k$-chromatic graph $F$ that
 \begin{equation}\label{A-S general}
 \ex(n, K_t, F) = \binom{k-1}{t}\left(\frac{n}{k-1}\right)^t + o(n^t).
 \end{equation}

Gishboliner and Shapira \cite{gs} determined the order of magnitude of $\ex(n,C_k,C_\ell)$ for every $\ell$ and $k\ge 3$. Moreover, they determined $\ex(n,C_k,C_4)$ asymptotically. We give a theorem (proved independently of the previous authors) that extends this result to $\ex(n,C_k,K_{2,t})$. Other results on the function $\ex(n,H,F)$ appear in \cite{gmv,ggmv,mq, GPS,letz,gk,cnr}. 

The goal of this paper is to determine new bounds $\ex(n,H,F)$ and investigate its behavior as a function. In Section~\ref{general section} we give general bounds using standard extremal graph theory techniques. In particular, we give the following extension of (\ref{A-S general}) using a modification of its proof in \cite{alonsik}.

\begin{theorem}\label{main}
Let $H$ be a graph and $F$ be a graph with chromatic number $k$, then
\[\ex(n,H,F) \leq \ex(n,H,K_k) + o(n^{|H|}).\]
\end{theorem}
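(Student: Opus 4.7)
The plan is a two-step reduction. Since $\chi(F) = k$, the graph $F$ embeds into $K_k(t)$, the complete balanced $k$-partite graph with $t := |V(F)|$ vertices in each part (assign each color class of $F$ to one part). Consequently every $F$-free graph $G$ is automatically $K_k(t)$-free, so it suffices to prove $\mathcal{N}(H,G) \leq \ex(n,H,K_k) + o(n^{|H|})$ for every $n$-vertex $K_k(t)$-free graph $G$.

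The core argument combines supersaturation for $K_k$ with the $K_k$-removal lemma. I first claim that any $K_k(t)$-free graph $G$ on $n$ vertices contains only $o(n^k)$ copies of $K_k$. To see this, form the $k$-uniform hypergraph $\mathcal{H}$ on $V(G)$ whose edges are exactly the vertex sets of $K_k$-copies in $G$, and invoke the classical theorem of Erd\H{o}s that the Tur\'an number of the complete $k$-partite $k$-uniform hypergraph with parts of size $t$ is $o(n^k)$. Any such sub-hypergraph in $\mathcal{H}$ corresponds to $k$ pairwise completely joined vertex sets in $G$, i.e.\ a copy of $K_k(t)$, so the assumption forces $|E(\mathcal{H})| = o(n^k)$. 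Now the $K_k$-removal lemma produces a $K_k$-free subgraph $G' \subseteq G$ with $|E(G) \setminus E(G')| = o(n^2)$. By definition $\mathcal{N}(H,G') \leq \ex(n,H,K_k)$, and every copy of $H$ present in $G$ but not in $G'$ must use at least one deleted edge; since each edge lies in at most $O(n^{|H|-2})$ copies of $H$, the total loss is bounded by $o(n^2) \cdot O(n^{|H|-2}) = o(n^{|H|})$. Adding the two contributions yields the claimed bound.

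The main step to verify carefully is the supersaturation claim: the constants in the hypergraph Tur\'an estimate must depend only on $k$ and $t$, so that the bound $o(n^k)$ on the number of $K_k$-copies is genuine as $n \to \infty$, with the removal-lemma error $o(n^2)$ chasing it. Once this is in place, nothing else in the argument is more than routine bookkeeping, and the approach essentially recasts the Alon--Shikhelman proof of the $H = K_t$ case (inequality~(\ref{A-S general})) in a form that is insensitive to the structure of $H$.
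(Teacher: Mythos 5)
Your argument is correct, but it takes a genuinely different route from the paper. The paper proves the contrapositive directly with the regularity lemma: it cleans the regularity partition by deleting $o(n^2)$ edges (inside clusters, in irregular or sparse pairs, and meeting the exceptional class), observes that this destroys only $o(n^{|H|})$ copies of $H$, concludes that the cleaned graph still exceeds $\ex(n,H,K_k)$ and hence contains a $K_k$ among its clusters, and then invokes the embedding lemma to plant $F$. You instead argue forward from $F$-freeness: $F\subseteq K_k(t)$ forces $G$ to be $K_k(t)$-free, the Erd\H{o}s hypergraph Tur\'an bound (applied to the $k$-uniform hypergraph of $K_k$-copies) gives only $o(n^k)$ copies of $K_k$, the $K_k$-removal lemma then deletes $o(n^2)$ edges to kill all of them, and the same edge-counting finishes. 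Both arguments are sound and both ultimately rest on Szemer\'edi regularity (the removal lemma is proved by it), so they are cousins rather than strangers; the shared step is that $o(n^2)$ deleted edges cost only $o(n^{|H|})$ copies of $H$. Your version is more modular and sidesteps the embedding lemma entirely, at the price of importing the removal lemma and the supersaturation theorem as black boxes; the paper's version is self-contained given its two quoted lemmas and makes the dependence of all constants explicit. Your closing caveat about the quantifiers is handled correctly: the supersaturation constants depend only on $k$ and $t=|V(F)|$, so for each target error $\alpha$ one chooses the removal-lemma parameter $\epsilon$ with $\epsilon\,\mathcal{N}(H,K_{|H|})<\alpha$ and then takes $n$ large.
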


Theorem~\ref{main} only gives a useful upper-bound if $\ex(n,H,K_k) = \Omega(n^{|H|})$, which happens if and only if $K_k$ is not a subgraph of $H$. 
For example, (\ref{A-S general}) follows by applying Corollary~\ref{erdos} in the case when $H=K_t$.
Applying (\ref{pentagons}) to the case when $H=C_5$ gives
\[
\ex(n, C_5, F) \leq \left(\frac{n}{5}\right)^5 + o(n^5)
\]
for every graph $F$ with chromatic number $3$. When $F$ contains a triangle, the construction giving the lower bound in (\ref{pentagons})
can be used to give an asymptotically equal lower bound on $\ex(n, C_5, F)$.

In Section~\ref{specific section} we determine $\ex(n,P_k,K_{2,t})$ and $\ex(n,C_k,K_{2,t})$ asymptotically.

It is natural to investigate which graphs $H$ and $F$ cause $\ex(n,H,F)$ to be linear in $n$.
When $H$ is $K_3$, Alon and Shikhelman \cite{alonsik} characterized the graphs $F$ with $\ex(n,K_3,F) = O(n)$.
In Section~\ref{linear section} we determine which graphs $F$ give $\ex(n,C_k,F)=O(n)$. Finally, in Section~\ref{berge section} we establish connections between this counting subgraph problem and Berge hypergraph problems.
For notation not defined in this paper, see Bollob\'as \cite{Bo-book}.

\section{General bounds on $\ex(n,H,F)$}\label{general section}

In this section we prove several general bounds on $\ex(n,H,F)$ using standard extremal graph theory techniques.
We begin with a proof of Theorem~\ref{main}. The proof mimics the proof of the Erd\H os-Stone-Simonovits by the regularity lemma. We use the following version of the regularity lemma and an embedding lemma found in \cite{Bo-book}. Recall that the {\it density} of a pair $A,B$ of vertex sets is $d(A,B)=e(A,B)/|A||B|$, where $e(A,B)$ is the number of edges between $A$ and $B$. Furthermore, the pair $A,B$ is {\it $\epsilon$-regular} if for any subsets $A'\subset A$, $B'\subset B$ with $|A'|\ge \epsilon |A|$ and $|B'|\ge \epsilon |B|$, we have $|d(A,B)-d(A',B')|< \epsilon$.

\begin{lemma}[Regularity Lemma]\label{regularity}
	For an integer $m$ and $0 < \epsilon < 1/2$ there exists an integer $M = M(\epsilon,m)$ such
	that every graph on $n \geq m$ vertices has a partition $V_0,V_1,\dots,V_r$ with $m \leq r \leq M$
	where $|V_0| < \epsilon n$, $|V_1| = |V_2| = \cdots = |V_r|$ and all but at most $\epsilon r^2$ of the pairs $V_i,V_j$, $1 \leq i < j \leq r$ are $\epsilon$-regular.
\end{lemma}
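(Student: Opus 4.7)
The plan is to prove the Regularity Lemma via Szemer\'edi's original energy-increment argument. For a partition $\mathcal{P}=\{V_1,\dots,V_k\}$ of $V(G)$, define the \emph{index}
\[
q(\mathcal{P}) = \sum_{1 \le i,j \le k} \frac{|V_i||V_j|}{n^2}\, d(V_i,V_j)^2.
\]
A direct Cauchy--Schwarz computation gives $0 \le q(\mathcal{P}) \le 1$, and shows that $q$ is monotone under refinement: if $\mathcal{P}'$ refines $\mathcal{P}$ then $q(\mathcal{P}') \ge q(\mathcal{P})$, with the increase equal to a weighted variance of densities across the finer cells.

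Next I would prove the \emph{local refinement lemma}: if a pair $(A,B)$ fails to be $\epsilon$-regular, witnessed by $A' \subset A$, $B' \subset B$ with $|A'| \ge \epsilon|A|$, $|B'| \ge \epsilon|B|$ and $|d(A',B') - d(A,B)| \ge \epsilon$, then the bipartitions $A = A' \cup (A\setminus A')$ and $B = B' \cup (B\setminus B')$ satisfy
\[
\sum_{i,j} \frac{|A_i||B_j|}{|A||B|}\, d(A_i,B_j)^2 \ge d(A,B)^2 + \epsilon^4.
\]
The bound follows by isolating the contribution of the $(A',B')$ cell in the variance identity and using the size lower bounds on $A'$ and $B'$ together with the density gap.

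Then I would iterate. Starting from an arbitrary partition into $m$ parts, at each stage check whether more than $\epsilon r^2$ of the pairs $(V_i,V_j)$ fail to be $\epsilon$-regular. If so, for each such pair select witness subsets, and simultaneously refine every $V_i$ by intersecting it with all witness subsets coming from its irregular pairs. Summing the local refinement lemma over the irregular pairs, each contributing $\epsilon^4 \cdot |V_i||V_j|/n^2 \approx \epsilon^4/r^2$, yields
\[
q(\mathcal{P}') \ge q(\mathcal{P}) + \epsilon^5.
\]
Since $q \le 1$, this loop halts in at most $\epsilon^{-5}$ rounds, and each round multiplies the number of parts by at most $2^r$, producing the tower-type bound $M=M(\epsilon,m)$.

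The main technical obstacle is converting the resulting partition into the equitable form demanded by the statement. To do this I would fix a block size $s = \lfloor \epsilon n/(2r) \rfloor$, subdivide each part $V_i$ arbitrarily into $\lfloor |V_i|/s \rfloor$ blocks of size exactly $s$, and dump every leftover vertex (together with any part smaller than $s$) into the exceptional class $V_0$. A short count shows $|V_0| < \epsilon n$, and an averaging argument verifies that $\epsilon$-regularity of a pair in the coarse partition transfers to $\epsilon$-regularity for all but a negligible fraction of the sub-block pairs inside it, so at most $\epsilon r^2$ of the new pairs fail to be regular (after a harmless adjustment of constants). Choosing $m$ slightly larger than needed absorbs the cost of this final rounding step and delivers the partition in the form stated in the lemma.
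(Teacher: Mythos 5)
The paper does not prove this lemma at all: it is quoted as a known result from Bollob\'as's book \cite{Bo-book}, so the comparison here is with the standard Szemer\'edi proof, which is indeed the route you chose. Your energy/index setup, the defect Cauchy--Schwarz computation giving the $\epsilon^4$ local increment, the $\epsilon^{-5}$ bound on the number of rounds, and the tower-type bound on $M$ are all the standard and correct ingredients.

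However, your plan of running the whole iteration on unconstrained partitions and performing equitization once at the end has a genuine gap, in two linked places. First, the accounting inside the loop: the step where each irregular pair contributes $\epsilon^4|V_i||V_j|/n^2 \approx \epsilon^4/r^2$, so that $\epsilon r^2$ irregular pairs force $q(\mathcal{P}') \ge q(\mathcal{P})+\epsilon^5$, is only valid when the parts have comparable sizes. After the first refinement into Venn-diagram atoms your parts are wildly uneven, and then ``more than $\epsilon r^2$ irregular pairs'' no longer implies any fixed energy increment (one would need the weighted criterion $\sum_{\mathrm{irregular}} |V_i||V_j| > \epsilon n^2$, which is not what the lemma's conclusion counts). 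Second, and more fatally, the final rounding step relies on the claim that $\epsilon$-regularity of a coarse pair transfers to all but a negligible fraction of sub-block pairs of a fixed global size $s$. This is false: $\epsilon$-regularity of $(A,B)$ controls densities only between subsets of size at least $\epsilon|A|$, $\epsilon|B|$, and the slicing lemma gives $\max\{2\epsilon, \epsilon/\alpha\}$-regularity for sub-pairs occupying an $\alpha$-fraction, which is vacuous when $s$ is a tiny fraction of a large part. Concretely, partition $A$ and $B$ into $t \gg 1/\epsilon$ clusters, and between each cluster pair $(A_i,B_j)$ place a complete bipartite graph between uniformly random halves $S_{ij}\subset A_i$, $T_{ij}\subset B_j$: with high probability $(A,B)$ is $\epsilon$-regular (densities of all large subsets concentrate near $1/4$), yet \emph{every} aligned cluster pair $(A_i,B_j)$ fails $\tfrac14$-regularity, witnessed by $(S_{ij},T_{ij})$. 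Since you subdivide ``arbitrarily,'' your argument must survive this alignment, and it does not. The standard remedy --- and the reason Szemer\'edi's proof is structured as it is --- is to maintain an equitable partition at \emph{every} round: refine into atoms, re-cut the atoms into blocks of one prescribed size, dump leftovers into $V_0$ (growing it by at most, say, $n/2^k$ per round so that $|V_0| < \epsilon n$ in total), and verify that this re-equitization costs at most half of the $\epsilon^5$ energy gain, so the increment survives. With that modification your outline becomes the correct proof; without it, both the termination argument and the final conversion step break.
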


\begin{lemma}[Embedding lemma]\label{embedding}
	Let $F$ be a $k$-chromatic graph with $f\geq 2$ vertices.
	Fix $0<\delta < \frac{1}{k}$, let $G$ be a graph and let $V_1,\dots, V_k$ be disjoint sets of vertices of $G$.
	If each $V_i$ has $|V_i| \geq \delta^{-f}$ and each pair of partition classes is $\delta^f$-regular with density $\geq \delta + \delta^f$, then $G$ contains $F$ as a subgraph.
\end{lemma}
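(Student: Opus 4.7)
The plan is a greedy embedding argument building an injective homomorphism $\phi: V(F) \to V(G)$ that respects color classes. Fix a proper $k$-coloring $c: V(F) \to \{1, \ldots, k\}$ of $F$ and require $\phi(v) \in V_{c(v)}$ for every $v \in V(F)$. Order the vertices of $F$ as $v_1, \ldots, v_f$ and embed them one at a time. For each vertex $v_r$ not yet embedded, maintain a candidate set $T_r \subseteq V_{c(v_r)}$ (initially $V_{c(v_r)}$) from which $\phi(v_r)$ may eventually be chosen. When $\phi(v_i)$ is chosen, update $T_r \leftarrow T_r \cap N_G(\phi(v_i))$ for every unembedded neighbor $v_r$ of $v_i$ in $F$. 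This guarantees that any future $\phi(v_r) \in T_r$ will automatically be adjacent in $G$ to $\phi(v_i)$.

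The heart of the argument is showing a valid choice of $\phi(v_i) \in T_i$ always exists. Call $u \in V_{c(v_i)}$ \emph{bad} for an unembedded neighbor $v_r$ of $v_i$ if $|N_G(u) \cap T_r| < (d(V_{c(v_i)}, V_{c(v_r)}) - \delta^f)|T_r|$. We carry along the invariant $|T_r| \geq \delta^f |V_{c(v_r)}|$ throughout the process. Under this invariant, $\delta^f$-regularity of the pair $(V_{c(v_i)}, V_{c(v_r)})$, applied to the bad set and to $T_r$, forces the bad set to have fewer than $\delta^f |V_{c(v_i)}|$ elements. The density hypothesis $d(V_{c(v_i)}, V_{c(v_r)}) \geq \delta + \delta^f$ then ensures that a non-bad $\phi(v_i)$ shrinks $T_r$ by a factor of at most $\delta$ at this update.

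To finish, sum the forbidden choices: at most $f - 1$ unembedded neighbors of $v_i$ contribute at most $(f - 1)\delta^f |V_{c(v_i)}|$ bad vertices in total, and there are at most $f - 1$ previously embedded vertices in $V_{c(v_i)}$ to avoid. Since $T_i$ has been updated at most $f - 1$ times with each update costing at most a factor of $\delta$, the invariant yields $|T_i| \geq \delta^{f-1} |V_{c(v_i)}|$, and the hypotheses $|V_{c(v_i)}| \geq \delta^{-f}$ and $\delta < 1/k$ make $|T_i|$ strictly exceed the total number of forbidden vertices. Hence a valid $\phi(v_i)$ exists at every step, and once all $f$ vertices are embedded we obtain the desired copy of $F$.

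The delicate point, and the main obstacle, is preserving the invariant $|T_r| \geq \delta^f |V_{c(v_r)}|$ at \emph{every} step where regularity is invoked: without it, the regularity estimate on the bad set is unavailable, and the greedy step can fail. This forces care in the ordering of updates and in verifying that after any potential update of $T_r$ its size remains above the regularity threshold, so that the counting bound on bad vertices can be applied cleanly at the next iteration.
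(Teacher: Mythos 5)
Before the comparison itself, note that the paper never proves this lemma: it is quoted as a known statement from Bollob\'as's \emph{Modern Graph Theory}, so there is no in-paper proof to match against. Your proposal follows the standard textbook route in its architecture, and most of it is sound: the proper $k$-coloring, the candidate sets $T_r$, the definition of bad vertices, the deduction from $\delta^f$-regularity that the bad set for a fixed unembedded neighbor has size less than $\delta^f|V_{c(v_i)}|$ (provided $|T_r|\ge \delta^f|V_{c(v_r)}|$), and the observation that the density hypothesis $d\ge \delta+\delta^f$ makes each update shrink $T_r$ by a factor of at least $\delta$ are all correct.

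The genuine gap is in your closing count. You assert that $|T_i|\ge \delta^{f-1}|V_{c(v_i)}|$ exceeds $(f-1)\delta^f|V_{c(v_i)}|+(f-1)$ because $\delta<1/k$ and $|V_{c(v_i)}|\ge \delta^{-f}$; but dividing through, the first term alone already demands $(f-1)\delta<1$, and $\delta<1/k$ does not supply this since $f$ may greatly exceed $k$. Concretely, for $F=C_5$ (so $k=3$, $f=5$) and the admissible value $\delta=0.3$, your guaranteed lower bound $|T_i|\ge \delta^4|V|=0.0081|V|$ is \emph{smaller} than your bad-vertex bound $4\delta^5|V|=0.00972|V|$, so the greedy step cannot be completed by your accounting. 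The repair is a per-vertex trade-off rather than a uniform union bound: if at the moment of embedding $v_i$ it has $e$ already-embedded and $u$ not-yet-embedded neighbors, then $e+u\le \deg_F(v_i)\le f-1$, $|T_i|\ge \delta^e|V_{c(v_i)}|$, the bad vertices number fewer than $u\delta^f|V_{c(v_i)}|$, and (since embedded neighbors of $v_i$ lie in other classes) the occupied vertices of $V_{c(v_i)}$ number at most $i-1-e\le f-1-e$. Writing $m=f-e\ge 1$ and using $\delta<1/k\le 1/2$, one has $u\delta^{f-e}\le (m-1)\delta^m\le \tfrac14$, hence
\[
|T_i|-u\delta^f|V_{c(v_i)}| \;\ge\; \tfrac34\,\delta^e|V_{c(v_i)}| \;\ge\; \tfrac34\,\delta^{-m} \;\ge\; \tfrac34\,2^m \;>\; m-1,
\]
so a valid choice of $\phi(v_i)$ always exists. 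This trade-off between $e$ and $u$ is precisely where the exponent $f$ (rather than $f-1$) in the hypotheses $|V_i|\ge \delta^{-f}$ and $\delta^f$-regularity is consumed; your uniform bound with $f-1$ in both roles discards that slack and, as the $C_5$ example shows, cannot be salvaged without it.
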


\begin{proof}[Proof of Theorem~\ref{main}]

	Fix $\delta > 0$ and an integer $m \geq k$ such that the following inequality holds
	\begin{equation}\label{alfa-bound}
	\left(\frac{1}{2m} + 2\delta^f + \frac{\delta + \delta^f}{2}\right) \mathcal{N}(H,K_{|H|}) < \alpha.
	\end{equation}

	Let us apply the regularity lemma with $\epsilon = \delta^f$ and $m$ to get $M = M(\epsilon,m)$. Let $G$ be a graph on  $n > M\delta^{-f}$ vertices and more than
	\[
	\ex(n,H,K_k) + \alpha n^{|H|}
	\]
	copies of $H$. We will show that $G$ contains $F$ as a subgraph.
	
	Let $V_0,V_1,\dots, V_r$ be the partition of $G$ given by the regularity lemma.
	We will remove the following edges.
	
	\begin{enumerate}
		\item Remove the edges inside of each $V_i$. There are at most $r \binom{n/r}{2} \leq \frac{n^2}{2r} \leq \frac{1}{2m}n^2$ such edges.
		
		\item Remove the edges between all pairs $V_i,V_j$ that are not $\epsilon$-regular. There are at most $\epsilon r^2$ such pairs and each has at most $(\frac{n}{r})^2$ edges. So we remove at most $\epsilon n^2$ such edges. 
		
		\item Remove the edges between all pairs $V_i,V_j$ if the density $d(V_i,V_j) < \delta+\delta^f$. 
		There are less than $\binom{r}{2} (\delta+\delta^f) (\frac{n}{r})^2 < \frac{\delta+\delta^f}{2} n^2$ such edges.
		
		\item Remove all edges incident to $V_0$. There are at most $\epsilon n^2$ such edges.
	\end{enumerate}
	
	In total we have removed at most 
	
	\[
	\left(\frac{1}{2m} + 2\delta^f + \frac{\delta + \delta^f}{2}\right)n^2
	\]
	edges.
	There are at most $\mathcal{N}(H,K_{|H|}) n^{|H|-2}$ copies of $H$
	containing a fixed edge.
	Therefore, by (\ref{alfa-bound}) we have removed less than $\alpha n^{|H|}$ copies of $H$.
	Thus, the resulting graph still has more than $\ex(n,H,K_k)$ copies of $H$ so it contains $K_k$ as a subgraph.
	
	The $k$ classes of the resulting graph that correspond to the vertices of $K_k$ satisfy the conditions of the embedding lemma so $G$ contains $F$.
\end{proof}

Using a standard first-moment argument of Erd\H os-R\'enyi \cite{Er-Re}  we can get a lower-bound on the number of copies of $H$ in an $F$-free graph.

\begin{prop} Let $F$ and $H$ be graphs such that $e(F) > e(H)$. Then
\[
\ex(n,H,F) = \Omega\left(n^{|H| - \frac{e(H)(|F|-2)}{e(F)-e(H)}}\right).
\]

\end{prop}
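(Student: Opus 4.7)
The plan is to use the probabilistic deletion method of Erd\H os and R\'enyi. Consider the random graph $G(n,p)$ on $n$ vertices with edge probability $p$, where $p = p(n)$ will be chosen later. Let $X$ denote the number of copies of $F$ in $G(n,p)$ and $Y$ the number of copies of $H$ in $G(n,p)$. By linearity of expectation,
\[
\mathbb{E}[Y] = \Theta\bigl(n^{|H|} p^{e(H)}\bigr) \quad \text{and} \quad \mathbb{E}[X] = \Theta\bigl(n^{|F|} p^{e(F)}\bigr),
\]
with implicit constants depending only on $H$ and $F$.

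Next, from each copy of $F$ in $G(n,p)$ delete one edge, and call the resulting subgraph $G'$. Then $G'$ is $F$-free, and the number of edges removed is at most $X$. A fixed edge of $K_n$ lies in at most $c_H\, n^{|H|-2}$ copies of $H$ (where $c_H$ depends only on $H$), so the number of copies of $H$ destroyed by the deletion is at most $c_H\, n^{|H|-2}\, X$. Therefore
\[
\mathbb{E}\bigl[\mathcal{N}(H,G')\bigr] \;\geq\; \mathbb{E}[Y] - c_H\, n^{|H|-2}\, \mathbb{E}[X] \;=\; \Theta\bigl(n^{|H|}p^{e(H)}\bigr) - O\bigl(n^{|H|+|F|-2} p^{e(F)}\bigr).
\]

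Now choose
\[
p = c \cdot n^{-\frac{|F|-2}{e(F)-e(H)}}
\]
for a small enough constant $c > 0$ (which is well-defined since $e(F) > e(H)$). With this choice, the ratio of the two terms above is $n^{|F|-2} p^{e(F)-e(H)} = c^{e(F)-e(H)}$, a constant that can be made as small as we like, so the first term dominates and
\[
\mathbb{E}\bigl[\mathcal{N}(H,G')\bigr] \;=\; \Omega\bigl(n^{|H|}p^{e(H)}\bigr) \;=\; \Omega\Bigl(n^{|H| - \frac{e(H)(|F|-2)}{e(F)-e(H)}}\Bigr).
\]
Hence some outcome of $G(n,p)$ yields an $F$-free graph $G'$ with at least this many copies of $H$, proving the claimed lower bound.

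The only mildly delicate point is the bookkeeping of the two competing expectations: one must verify that with the stated exponent for $p$, the expected loss from the deletion is genuinely a smaller constant multiple of the expected gain, which is precisely why the condition $e(F) > e(H)$ is needed (to ensure that shrinking $c$ decreases the loss faster than the gain). Everything else is routine first-moment estimation.
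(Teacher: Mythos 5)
Your proof is correct and takes essentially the same route as the paper: the Erd\H os--R\'enyi deletion method with $p = c\, n^{-\frac{|F|-2}{e(F)-e(H)}}$, comparing the expected number of copies of $H$ against the expected loss from deleting one edge per copy of $F$. Your choice of a \emph{small} constant $c$ is in fact the right one for the first term to dominate (the paper's stated constant $c = |H|^{|H|(e(F)-e(H))}+1$ appears to point the wrong way), so nothing further is needed.
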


\begin{proof}
	Let $G$ be an $n$-vertex random graph with edge probability
	\[
	p = cn^{-\frac{|F|-2}{e(F)-e(H)}}
    \]	
	where $c = |H|^{|H|(e(F)-e(H))}+1$.
	
	Among $|F|$ vertices in $G$ there are at most $|F|!$ copies of the graph $F$. Therefore, the expected number of copies of $F$ is at most
	\[
	|F|!\binom{n}{|F|}p^{e(F)} \leq n^{|F|} p^{e(F)}.
	\]
	Fix $|H|$ vertices in $G$. The probability of a particular copy of $H$ appearing among those 			vertices is $p^{e(H)}$.
	Thus, the probability of at least one copy of $H$ appearing among those $|H|$ vertices is at 			least $p^{e(H)}$.
	Therefore, the expected number of copies of $H$ is at least
	\[	
	\binom{n}{|H|} p^{e(H)} \geq \left( \frac{n}{|H|} \right)^{|H|} p^{e(H)}.
	\]

	We remove an edge from each copy of $F$ in $G$ and count the remaining copies of $H$. 
	There are at most $n^{|H|-2}$
	copies of $H$ destroyed for each edge removed from $G$.
	
	Let $X$ be the random variable defined by the difference between the number of copies of $H$ and the number of copies of $H$ destroyed by the removal of edges.
	The expectation of $X$ is
	\[
	E[X] \geq \left( \frac{n}{|H|} \right)^{|H|} p^{e(H)}- n^{|H|-2}n^{|F|} p^{e(F)}.
	\]
	Which simplifies to
	\[
	E[X] = \Omega \left(n^{|H| - \frac{e(H)(|F|-2)}{e(F)-e(H)}}\right).
	\]
	This implies that there exists a graph such that after removing an edge from each copy of $F$ we are left with at least $E[X]$ copies of $H$.	
\end{proof}

We conclude this section with two simple bounds on $\ex(n,H,F)$. Neither result is likely to give a sharp bound, but may be useful as simple tools.

\begin{prop}\label{observ1} 
	$\ex(n,H,F) \ge \ex(n,F)-\ex(n,H)$.
\end{prop}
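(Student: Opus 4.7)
The plan is to start from an extremal $F$-free graph and use a greedy edge-deletion argument to show it must already contain many copies of $H$.

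Let $G$ be an $n$-vertex $F$-free graph with $e(G)=\ex(n,F)$, and write $N=\mathcal{N}(H,G)$. Since $G$ is $F$-free, we have $\ex(n,H,F)\ge N$, so it suffices to show $N\ge \ex(n,F)-\ex(n,H)$ (and we may assume the right-hand side is positive, since otherwise the claim is trivial).

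Run the following greedy procedure on $G$: while the current graph contains a copy of $H$, pick any such copy and delete one of its edges. Each step deletes a single edge and destroys at least one copy of $H$ (namely, the copy from which the edge was chosen), so the procedure terminates after at most $N$ steps. Let $G'$ be the resulting graph and let $s$ be the number of deleted edges, so $s\le N$. By construction $G'$ is $H$-free, and therefore $e(G')\le \ex(n,H)$. On the other hand $e(G')=\ex(n,F)-s$, so
\[
s\ge \ex(n,F)-\ex(n,H).
\]
Combining $s\le N$ with the above inequality gives $N\ge \ex(n,F)-\ex(n,H)$, and hence $\ex(n,H,F)\ge \ex(n,F)-\ex(n,H)$.

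There is no serious obstacle here: the only thing to be careful about is that the greedy procedure's step count is correctly bounded by $N$ (since each deletion kills at least one copy of $H$), and that $G'$ remains $F$-free automatically because deleting edges cannot create a copy of $F$.
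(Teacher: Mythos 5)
Your proof is correct and is essentially the paper's own argument: both start from an edge-maximal $F$-free graph, delete edges until no copy of $H$ remains, and sandwich the number of deleted edges between $\ex(n,F)-\ex(n,H)$ and $\mathcal{N}(H,G)$. The only cosmetic difference is that you phrase the deletion as a greedy loop while the paper removes one edge from each copy of $H$ in a single pass; the counting is the same.
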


\begin{proof} 
	Consider an edge-maximal $n$-vertex $F$-free graph $G$. Remove an edge from each copy of the subgraph $H$ in $G$. The resulting graph does not contain $H$ and therefore has at most $\ex(n,H)$ edges. This means we have removed at least $\ex(n,F)-\ex(n,H)$ edges from $G$, thus $G$ (which is an $F$-free graph on $n$ vertices) contained at least $\ex(n,F)-\ex(n,H)$ copies of $H$.
\end{proof}

The other simple observation is a consequence of the Kruskal-Katona theorem \cite{Kru,Kat}. 
A hypergraph $\mathcal{H}$ is $k$-\emph{uniform} if all hyperedges have size $k$.
For a $k$-uniform hypergraph $\mathcal{H}$, the $i$-\emph{shadow} is the $i$-uniform hypergraph $\Delta_i \mathcal{H}$
whose hyperedges are the collection of all subsets of size $i$ of the hyperedges of $\mathcal{H}$.
We denote the collection hyperedges of a hypergraph $\mathcal{H}$ by $E(\mathcal{H})$.
Here we use a version of the Kruskal-Katona theorem due to Lov\'asz \cite{Lov}.

\begin{theorem}[Lov\'asz, \cite{Lov}]\label{KKL}
	If $\mathcal{H}$ is a $k$-uniform hypergraph and
	\[
	|E(\mathcal{H})| = \binom{x}{k} = \frac{x(x-1)\cdots (x-k+1)}{k!}
	\]
	for some real number $x \geq k$, then
	\[
	|E(\Delta_i \mathcal{H})| \geq \binom{x}{i}.
	\]
\end{theorem}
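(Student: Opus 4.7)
My plan is to establish Theorem~\ref{KKL} in the spirit of Lov\'asz's original argument, via two reductions followed by an explicit calculation on a canonical extremal family.

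First, I would reduce the general $i$-shadow inequality to the single ``one-step'' case $i=k-1$. Suppose the following is known: for every $k$-uniform hypergraph $\mathcal{H}$ with $|E(\mathcal{H})| = \binom{y}{k}$ and real $y \geq k$, one has $|E(\Delta_{k-1}\mathcal{H})| \geq \binom{y}{k-1}$. Given our $\mathcal{H}$ with $|E(\mathcal{H})| = \binom{x}{k}$, define $x_1 \geq k-1$ by $|E(\Delta_{k-1}\mathcal{H})| = \binom{x_1}{k-1}$; this is possible because $\binom{y}{k-1}$ is continuous and strictly increasing on $[k-1,\infty)$. The one-step inequality forces $x_1 \geq x$, and since $\Delta_{k-2}(\Delta_{k-1}\mathcal{H}) = \Delta_{k-2}\mathcal{H}$, we may apply the same bound to the $(k-1)$-uniform hypergraph $\Delta_{k-1}\mathcal{H}$. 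Iterating this $k-i$ times yields $|E(\Delta_i\mathcal{H})| \geq \binom{x}{i}$ as required.

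Next, I would handle the one-step case $i = k-1$ via the standard \emph{shifting} technique. For $1 \leq a < b$, define the compression operator $\sigma_{ab}$ which, for each edge $A\in\mathcal{H}$ with $b\in A$, $a\notin A$, and $(A\setminus\{b\})\cup\{a\}\notin \mathcal{H}$, replaces $A$ by $(A\setminus\{b\})\cup\{a\}$. A routine verification shows $|E(\sigma_{ab}\mathcal{H})| = |E(\mathcal{H})|$ while $|E(\Delta_{k-1}\sigma_{ab}\mathcal{H})| \leq |E(\Delta_{k-1}\mathcal{H})|$. Iterating $\sigma_{ab}$ over all pairs eventually stabilizes at a \emph{left-compressed} hypergraph whose shadow is no larger than the original, so it suffices to establish the shadow bound there. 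A further standard argument allows us to assume the compressed hypergraph is the initial segment of the colex order on $k$-subsets of length $|E(\mathcal{H})|$.

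Finally, for the colex-initial segment of size $\binom{x}{k}$, the shadow can be computed explicitly. Writing $n = \lfloor x\rfloor$, the segment consists of all $k$-subsets of $\{1,\ldots,n\}$ together with a carefully chosen collection of $k$-subsets containing $n+1$, and its $(k-1)$-shadow decomposes analogously. The \textbf{main obstacle} is verifying that this shadow has size at least $\binom{x}{k-1}$ for \emph{arbitrary real} $x\geq k$: the integer case is the classical Kruskal-Katona theorem, but passing to real $x$ requires a single-variable inequality relating the growth rates of $\binom{y}{k}$ and $\binom{y}{k-1}$, which can be handled by convexity of the generalized binomial coefficient together with the identity $\binom{y}{k} = \binom{y-1}{k} + \binom{y-1}{k-1}$. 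This delicate interpolation from integer to real arguments is the main technical contribution of Lov\'asz's strengthening of the original theorem.
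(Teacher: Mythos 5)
The paper does not prove Theorem~\ref{KKL}; it is quoted from Lov\'asz's book with a citation, so there is no in-paper argument to compare against and your proposal must stand on its own. Your first reduction (iterating the one-step shadow bound, using that $\binom{y}{i}$ is increasing in $y$ and that $\Delta_i\mathcal{H}=\Delta_i(\Delta_{k-1}\mathcal{H})$) is correct, and the elementary compression step ($|E(\sigma_{ab}\mathcal{H})|=|E(\mathcal{H})|$ and $\Delta_{k-1}(\sigma_{ab}\mathcal{H})\subseteq\sigma_{ab}(\Delta_{k-1}\mathcal{H})$) is standard and fine. The genuine gap is the sentence ``a further standard argument allows us to assume the compressed hypergraph is the initial segment of the colex order.'' That is false as a reduction: a family stable under all the elementary compressions $\sigma_{ab}$ need not be a colex initial segment. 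For instance $\{\{1,2\},\{1,3\},\{1,4\}\}$ is left-compressed but is not the colex initial segment of length $3$ (which is $\{\{1,2\},\{1,3\},\{2,3\}\}$), and its shadow is strictly larger. Passing from ``left-compressed'' to ``the colex initial segment is extremal'' is precisely the content of the Kruskal--Katona theorem; it requires the main inductive argument (Frankl's or Daykin's set-compressions $C_{U,V}$ with their more delicate shadow lemma, or an induction on the ground set), so as written you have assumed the hard part. The final step, the interpolation to real $x$, is also only gestured at (``convexity \ldots{} together with Pascal's identity'') rather than carried out, and it is not a triviality since $\binom{x}{k-1}$ need not be an integer.

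A cleaner route to exactly the Lov\'asz form, avoiding colex segments entirely, is a double induction on $k$ and $|E(\mathcal{H})|$: fix a vertex $v$, compress toward $v$ so that the link $\mathcal{A}=\{A\setminus\{v\}: v\in A\in\mathcal{H}\}$ contains the $(k-1)$-shadow of the edges missing $v$; then $\Delta_{k-1}\mathcal{H}$ contains the disjoint union of $\mathcal{A}$ and $\{S\cup\{v\}: S\in\Delta_{k-2}\mathcal{A}\}$, one shows $|\mathcal{A}|\geq\binom{x-1}{k-1}$, and the induction hypothesis together with $\binom{x-1}{k-1}+\binom{x-1}{k-2}=\binom{x}{k-1}$ finishes the proof. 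I recommend either carrying out that argument or simply citing the result, as the paper does.
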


This gives the following easy corollary,

\begin{cor} 
	\[
	\ex(n,K_t,F) \leq \ex(n,F)^{t/2}.
	\]
\end{cor}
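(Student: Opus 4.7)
The plan is to encode the copies of $K_t$ in an extremal graph as hyperedges of a $t$-uniform hypergraph and apply Theorem~\ref{KKL}. Let $G$ be an $n$-vertex $F$-free graph with $\ex(n,K_t,F)$ copies of $K_t$, and let $\mathcal{H}$ be the $t$-uniform hypergraph on $V(G)$ whose hyperedges are exactly the vertex sets of these copies of $K_t$, so that $|E(\mathcal{H})|=\ex(n,K_t,F)$. If this number is zero the conclusion is trivial; otherwise, since $x\mapsto \binom{x}{t}$ is continuous and strictly increasing from $1$ to $\infty$ on $[t,\infty)$, the intermediate value theorem lets me write $|E(\mathcal{H})|=\binom{x}{t}$ for some real $x\ge t$.

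The crucial observation is that any pair of vertices contained in a common copy of $K_t$ is automatically an edge of $G$, so $E(\Delta_2\mathcal{H})\subseteq E(G)$, and hence $|E(\Delta_2\mathcal{H})|\le e(G)\le \ex(n,F)$. On the other hand, Theorem~\ref{KKL} applied with $i=2$ gives $|E(\Delta_2\mathcal{H})|\ge \binom{x}{2}$. Combining these two inequalities yields $\binom{x}{2}\le \ex(n,F)$.

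To finish, it suffices to establish the elementary inequality $\binom{x}{t}\le \binom{x}{2}^{t/2}$ for all real $x\ge t\ge 2$, since then $\ex(n,K_t,F)=\binom{x}{t}\le \binom{x}{2}^{t/2}\le \ex(n,F)^{t/2}$. I would prove this by induction on $t$, exploiting the polynomial identity $\binom{t}{2}\binom{x}{t}=\binom{x}{2}\binom{x-2}{t-2}$ together with the monotonicity $\binom{x-2}{2}\le \binom{x}{2}$ and the fact that $\binom{t}{2}\ge 1$; the base cases are $t=2$ (trivial equality) and $t=3$ (a short direct verification reducing to a quadratic inequality that is positive for $x\ge 3$). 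This last step, a purely combinatorial inequality between binomial coefficients, is the only place requiring any genuine computation; the rest of the argument is a direct shadow computation driven entirely by Theorem~\ref{KKL}.
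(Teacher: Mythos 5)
Your proof is correct and follows essentially the same route as the paper: encode the copies of $K_t$ as the hyperedges of a $t$-uniform hypergraph, apply Lov\'asz's form of the Kruskal--Katona theorem (Theorem~\ref{KKL}) with $i=2$, and observe that the $2$-shadow is a subgraph of $G$ and hence has at most $\ex(n,F)$ edges. The only difference is that you make explicit the elementary inequality $\binom{x}{t}\le\binom{x}{2}^{t/2}$ for real $x\ge t$, which the paper leaves implicit in the step ``combining (\ref{kk-first}) and (\ref{kk-second}) gives the corollary''; your sketched verification of that inequality is sound.
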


\begin{proof}
	Suppose $G$ is $F$-free and has the maximum number of copies of $K_t$. Let us consider the hypergraph $\mathcal{H}$ 
	whose hyperedges are the vertex sets of each copy of $K_t$ in $G$. Pick $x$ such that the number of hyperedges in $\mathcal{H}$ is
	\begin{equation}\label{kk-first}
	|E(\mathcal{H})| = \binom{x}{t}.
	\end{equation}
	Applying Theorem~\ref{KKL} we get that the $2$-uniform hypergraph (i.e., graph) $\Delta_2\mathcal{H}$ has size at least $\binom{x}{2}$.
	
	On the other hand, the family $\Delta_2\mathcal{H}$ is a subgraph of $G$. Therefore,
	\begin{equation}\label{kk-second}
	\binom{x}{2} \leq e(G) \leq \ex(n,F).
	\end{equation}
	Combining (\ref{kk-first}) and (\ref{kk-second}) gives the corollary.
\end{proof}

\section{Counting paths and cycles in $K_{2,t}$-free graphs}\label{specific section}

The maximum number of edges in a $K_{2,t}$-free graph is
\begin{equation}\label{k2t-bound}
\ex(n,K_{2,t}) = \left(\frac{1}{2} +o(1) \right) \sqrt{t-1}n^{3/2}.
\end{equation}

The upper bound above is given by K\H ov\'ari, S\'os and Tur\'an \cite{kst}
and the lower bound is given by an algebraic construction of F\"uredi \cite{Fu-graph}. We will refer to this construction as the \emph{F\"uredi graph} $F_{q,t}$.
We recall some well-known properties of $F_{q,t}$ without giving a full description of its construction. 
For fixed $t$ and $q$ a prime power such that $t-1$ divides $q-1$,
the graph $F_{q,t}$ has $n=(q^2-1)/(t-1)$ vertices. All but at most $2q$ vertices have degree $q$ and the others have degree $q-1$, thus the number of edges is $(1/2+o(1))\sqrt{t-1}n^{3/2}$. Furthermore, every pair of vertices has at most $t-1$ common neighbors while every pair of non-adjacent vertices has exactly $t-1$ common neighbors.

Alon and Shikhelman \cite{alonsik} used the F\"uredi graph to give a lower bound in the following theorem.
\begin{theorem}[Alon, Shikhelman, \cite{alonsik}]
	\[
\ex(n,K_3,K_{2,t}) = \left(\frac{1}{6} +o(1)\right) (t-1)^{3/2}n^{3/2}.
\]
\end{theorem}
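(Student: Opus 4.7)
I plan to establish matching upper and lower bounds, each asymptotically $(1/6)(t-1)^{3/2}n^{3/2}$.

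For the upper bound, I would use that in any $n$-vertex $K_{2,t}$-free graph $G$, every pair of vertices shares at most $t-1$ common neighbors, so each edge $uv$ lies in at most $|N(u)\cap N(v)| \le t-1$ triangles. Summing over edges and dividing by $3$ (each triangle is counted once per incident edge),
\[
\mathcal{N}(K_3,G) \;\le\; \frac{t-1}{3}\, e(G) \;\le\; \frac{t-1}{3}\, \ex(n,K_{2,t}),
\]
and substituting the K\H ov\'ari-S\'os-Tur\'an bound (\ref{k2t-bound}) immediately yields the desired $(1/6+o(1))(t-1)^{3/2}n^{3/2}$.

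For the lower bound, I would take $G = F_{q,t}$ with $q$ a prime power satisfying the required divisibility and $n = (q^2-1)/(t-1)$, and count triangles via the path-of-length-two identity
\[
\sum_{w \in V(G)} \binom{d(w)}{2} \;=\; 3\,\mathcal{N}(K_3,G) \;+\; \sum_{\{u,v\}\notin E(G)} |N(u)\cap N(v)|.
\]
Since every non-adjacent pair in $F_{q,t}$ has exactly $t-1$ common neighbors, the second summand equals $(t-1)\bigl(\binom{n}{2}-e(G)\bigr)$. Plugging in the near-regular degree sequence of $F_{q,t}$ (all but $O(q)$ vertices of degree $q$), together with the identities $q^2 = (t-1)n+1$ and $e(G) = (1/2+o(1))\sqrt{t-1}\,n^{3/2}$, one then solves for $\mathcal{N}(K_3,G)$. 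For $n$ not of the form $(q^2-1)/(t-1)$, I would pad $F_{q,t}$ with isolated vertices for a suitable $q$, using Bertrand's postulate to ensure $q$ is close enough to $\sqrt{(t-1)n}$ that the asymptotic constant is unchanged.

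The main obstacle is a delicate cancellation at the leading order: both $\sum_w \binom{d(w)}{2}$ and $(t-1)\binom{n}{2}$ equal $(t-1)n^2/2$ up to terms that are lower order in $n^2$, so the triangle count emerges only from the $n^{3/2}$-order correction. Consequently one has to track the degree sequence and the edge count of $F_{q,t}$ with precision $O(n^{1/2})$, making essential use of the exact relation $q^2 - 1 = (t-1)n$ and the control on the number of degree-$(q-1)$ vertices, in order to recover the constant $(t-1)^{3/2}/6$ rather than a weaker leading coefficient.
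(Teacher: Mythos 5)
Your upper bound is correct and is the standard argument: each edge lies in at most $t-1$ triangles, each triangle is counted three times, and (\ref{k2t-bound}) finishes it. (The paper does not reprove this theorem --- it cites Alon--Shikhelman --- but this matches how the upper bounds in Theorem~\ref{fredi} are obtained.)

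The lower bound, however, has a genuine gap, and it sits exactly at the ``delicate cancellation'' you flag at the end. The step ``the second summand equals $(t-1)\bigl(\binom{n}{2}-e(G)\bigr)$'' is not correct for the F\"uredi graph: in $F_{q,t}$ the vertices are the $H$-orbits of nonzero vectors of $\mathbb{F}_q^2$ (with $|H|=t-1$), and two distinct classes represented by \emph{proportional} vectors $\langle a,b\rangle$, $\langle \lambda a,\lambda b\rangle$ with $\lambda H\neq H$ have \emph{zero} common neighbors, since $ax+by\in H$ and $\lambda(ax+by)\in H$ cannot hold simultaneously. There are about $\tfrac{n}{2}\cdot\tfrac{q-1}{t-1}=\Theta(n^{3/2})$ such pairs, almost all non-adjacent, so the true value of $\sum_{\{u,v\}\notin E(G)}|N(u)\cap N(v)|$ is smaller than $(t-1)\bigl(\binom{n}{2}-e(G)\bigr)$ by roughly $\tfrac{nq}{2}=\Theta(n^{3/2})$ --- precisely the order of the quantity you are solving for. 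If you carry out your computation with the blanket claim ``every non-adjacent pair has exactly $t-1$ common neighbors'' (which is how the paper loosely states the property, harmlessly for its own uses since $\Theta(n^{3/2})=o(n^2)$), you obtain $\mathcal{N}(K_3,F_{q,t})=\bigl(\tfrac{1}{6}+o(1)\bigr)(t-2)\sqrt{t-1}\,n^{3/2}$, the wrong constant; for $t=2$ it even collapses to $O(n)$ instead of $\Theta(n^{3/2})$. The clean fix is to avoid the complement entirely: all but $O(n)$ \emph{adjacent} pairs are represented by linearly independent vectors and hence have exactly $t-1$ common neighbors, so $3\,\mathcal{N}(K_3,F_{q,t})=(t-1)e(G)-O(n\cdot t)$, which gives the claimed $\bigl(\tfrac16+o(1)\bigr)(t-1)^{3/2}n^{3/2}$ directly. (Alternatively, keep your identity but subtract the contribution of the proportional pairs explicitly.) Your padding argument for general $n$ is fine.
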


We generalize this theorem to cycles of arbitrary length and paths. We use the notation $v_1v_2\cdots v_k$ for the path $P_k$ with vertices $v_1,\dots, v_k$ and edges $v_{i}v_{i+1}$ (for $1\leq i\leq k-1$). The cycle $C_k$ that includes this path and the edge $v_kv_1$ is denoted  $v_1v_2\cdots v_kv_1$.

\begin{prop}\label{c4s}
	For $t \geq 3$,
	\[
	\ex(n,C_4,K_{2,t}) = \left(\frac{1}{4} +o(1)\right) \binom{t-1}{2} n^2.
	\]
\end{prop}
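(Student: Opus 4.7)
The plan is to match an upper and a lower bound through the standard identity expressing the number of $4$-cycles in a graph in terms of codegrees, and then use the Füredi graph for the lower bound.

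For the upper bound, let $c(u,v)$ denote the number of common neighbors of an unordered pair $\{u,v\}$ of distinct vertices of a graph $G$. I would first observe the identity
\[
\mathcal{N}(C_4,G) \;=\; \frac{1}{2}\sum_{\{u,v\}} \binom{c(u,v)}{2},
\]
where the sum runs over all unordered pairs of distinct vertices. The reason is that each $4$-cycle is determined by specifying one of its two ``diagonal'' pairs $\{u,v\}$ together with the unordered pair of common neighbors of $u$ and $v$ on the cycle; since every $C_4$ has exactly two such diagonal pairs, we divide by $2$. If $G$ is $K_{2,t}$-free, then $c(u,v)\le t-1$ for every pair, so
\[
\mathcal{N}(C_4,G)\;\le\;\frac{1}{2}\binom{n}{2}\binom{t-1}{2}\;=\;\Bigl(\tfrac14+o(1)\Bigr)\binom{t-1}{2}n^2.
\]

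For the lower bound, I would invoke the Füredi graph $F_{q,t}$ described in the excerpt. For a prime power $q$ with $t-1\mid q-1$, this is a $K_{2,t}$-free graph on $N=(q^2-1)/(t-1)$ vertices in which every pair of non-adjacent vertices has exactly $t-1$ common neighbors. Given $n$, choose the largest such $q$ with $N\le n$; by Dirichlet's theorem on primes in arithmetic progressions, $q$ can be taken so that $N=n-o(n)$, and we pad with $n-N$ isolated vertices. Since $e(F_{q,t})=O(n^{3/2})$, all but $o(n^2)$ of the $\binom{n}{2}$ pairs are non-adjacent, and each non-adjacent pair contributes $\binom{t-1}{2}$ to the right-hand side of the identity above. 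Thus
\[
\mathcal{N}(C_4,F_{q,t})\;\ge\;\frac{1}{2}(1-o(1))\binom{n}{2}\binom{t-1}{2}\;=\;\Bigl(\tfrac14-o(1)\Bigr)\binom{t-1}{2}n^2.
\]

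There is no deep obstacle. The main care is bookkeeping: remembering the factor $\tfrac12$ in the diagonal-pair identity, and verifying that prime powers $q$ with $t-1\mid q-1$ are dense enough to realise the Füredi construction on nearly $n$ vertices for every $n$. Edges of $F_{q,t}$ contribute at most $O(n^{3/2})$ to $\mathcal{N}(C_4,F_{q,t})$, which is absorbed into the $o(n^2)$ error.
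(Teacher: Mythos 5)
Your proof is correct and follows essentially the same approach as the paper: the upper bound is the same double count over diagonal pairs using the codegree bound $c(u,v)\le t-1$ (you merely package it as an explicit identity), and the lower bound is the same count of non-adjacent pairs in the F\"uredi graph $F_{q,t}$, with the density of suitable prime powers handled as a standard (and correctly flagged) technicality.
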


\begin{proof}
	We begin with the upper bound. Consider an $n$-vertex graph $G$ that is $K_{2,t}$-free.
	Fix two vertices $u$ and $v$. As $G$ is $K_{2,t}$-free, $u$ and $v$ have at most $t-1$ common neighbors.
	Therefore the number of $C_4$s with $u$ and $v$ as non-adjacent vertices is at most $\binom{t-1}{2}$. Therefore, the  number of $C_4$s in $G$ is at most
	\[
	\frac{1}{2}\binom{n}{2} \binom{t-1}{2} \leq \frac{1}{4}\binom{t-1}{2} n^2
	\]
	as each cycle is counted twice.
	
	The lower bound is given by the F\"uredi graph $F_{q,t}$. Every pair of non-adjacent vertices has $t-1$ common neighbors, so there are $\binom{t-1}{2}$ copies of $C_4$ containing them. There are $(1/2+o(1))n^2$ pairs of non-adjacent vertices in $F_{q,t}$. Each $C_4$ is counted twice in this way, so the number of $C_4$s in $F_{q,t}$ is at least
	\[
	\
	\frac{1}{2}\left(\frac{1}{2}+o(1)\right) n^2 \binom{t-1}{2} 
	\]
	
\end{proof}

A slightly more sophisticated argument than the proof of Proposition~\ref{c4s} is needed to count longer cycles and paths.

\begin{theorem}\label{fredi} Fix $t\ge 2$. For $k \geq 5$,
	
	\[\ex(n,C_k,K_{2,t})=\left(\frac{1}{2k}+o(1)\right)(t-1)^{k/2}n^{k/2}\]
	 and for $k\geq 2$,
	\[\ex(n,P_k,K_{2,t})=\left(\frac{1}{2}+o(1)\right)(t-1)^{(k-1)/2}n^{(k+1)/2}.\]
	
\end{theorem}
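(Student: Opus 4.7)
The plan is to obtain both bounds through a common ``cherry decomposition'' of paths and cycles, where a cherry is a path of length $2$. The $K_{2,t}$-freeness of $G$ enters only through the inequality $|N(u) \cap N(v)| \le t-1$ for every pair of vertices, which says that once the two endpoints of a cherry are fixed there are at most $t-1$ choices for its middle vertex. The lower bounds will be witnessed by the F\"uredi graph $F_{q,t}$, in which this inequality is an equality for every non-adjacent pair and the K\H ov\'ari--S\'os--Tur\'an bound $e(F_{q,t}) = (\tfrac{1}{2}+o(1))\sqrt{t-1}\,n^{3/2}$ is asymptotically tight.

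Consider an ordered path $v_1 v_2 \cdots v_k$. If $k = 2m+1$ is odd, partition the $2m$ edges into $m$ consecutive cherries $(v_{2i-1}, v_{2i}, v_{2i+1})$: the $m+1$ odd-indexed vertices can be chosen in at most $n^{m+1}$ ways and each middle in at most $t-1$ ways, bounding the number of ordered paths by $(t-1)^{(k-1)/2} n^{(k+1)/2}$. If $k = 2m$ is even, use $m-1$ cherries plus a leftover edge $v_{2m-1}v_{2m}$ and bound this leftover by summing over ordered edges via the K\H ov\'ari--S\'os--Tur\'an bound on $e(G)$; this yields the same asymptotic bound. Dividing by the $2$ traversals of each unordered path finishes the upper bound on $\ex(n, P_k, K_{2,t})$. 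For ordered cycles $v_1 v_2 \cdots v_k v_1$, apply the same decomposition cyclically: when $k = 2m \ge 6$ is even the cycle splits into exactly $m$ cherries (with indices mod $k$) and one obtains at most $n^m (t-1)^m$ ordered cycles, while when $k = 2m+1 \ge 5$ is odd the edges form $m$ cherries plus a leftover chord $v_{2m+1}v_1$ handled again by the K\H ov\'ari--S\'os--Tur\'an bound. Division by the $2k$ orientations of each unordered cycle produces the coefficient $1/(2k)$.

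For the lower bounds I take $G = F_{q,t}$ and run the same counting forwards. Since $F_{q,t}$ has density $o(1)$, all but an $o(1)$-fraction of tuples of ``odd-indexed'' vertices are pairwise non-adjacent, and for each such tuple every cherry middle has exactly $t-1$ possible choices in $F_{q,t}$. A short first-moment estimate using the fact that in $F_{q,t}$ three vertices share only $O((t-1)^{3/2}/\sqrt n)$ common neighbors on average shows that the chosen middles are distinct from one another and from the other vertices of the path or cycle on all but an $o(1)$-fraction of tuples. The leftover edge in the even-path and odd-cycle cases is supplied by the $\sim \frac{1}{2}\sqrt{t-1}\,n^{3/2}$ edges of $F_{q,t}$. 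The main obstacle is purely bookkeeping: in both directions one must verify that the distinctness conditions affect the count only by a $(1+o(1))$ factor. This needs to be checked separately for paths, even cycles, and odd cycles, but in each case it reduces to a routine first-moment estimate with $k$ and $t$ fixed and $n \to \infty$.
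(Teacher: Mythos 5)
Your proposal is correct and follows essentially the same route as the paper: the upper bound is the identical ``cherry'' decomposition (fix the alternate vertices, plus one edge counted via the K\H ov\'ari--S\'os--Tur\'an bound in the odd-cycle and even-path cases, then $t-1$ choices per middle vertex, then divide by the $2$ or $2k$ automorphic labelings), and the lower bound uses the same F\"uredi graph $F_{q,t}$ with the same two properties (near-$q$-regularity and exactly $t-1$ common neighbors for non-adjacent pairs). The only cosmetic difference is in the lower-bound bookkeeping, where you fix the whole skeleton and discard degenerate tuples by a first-moment estimate while the paper builds each path or cycle greedily vertex by vertex; both handle the distinctness issues correctly.
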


\begin{proof} 
	We begin with the upper bound for $\ex(n,C_k,K_{2,t})$. 
	Let $G$ be a $K_{2,t}$-free graph. We distinguish two cases based on the parity of $k$.

	{\bf Case 1}:  $k$ is even.
	Fix a $(k/2)$-tuple $(x_1,x_2,\dots, x_{k/2})$ of distinct vertices of $G$. This can be done in at most $n^{k/2}$ ways. We count the number of cycles $v_1v_2\cdots v_kv_1$ such that $x_i = v_{2i}$ for $1 \leq i \leq k/2$.  As $G$ is $K_{2,t}$-free,
	there are at most $t-1$ choices for each vertex $v_{2i+1}$ on the cycle (for $0 \leq i \leq (k-2)/2$) as $v_{2i+1}$ must be joined to both $v_{2i+2}$ and $v_{2i}$ (where the indicies are modulo $k$).
	Each cycle $v_1v_2\cdots v_kv_1$ is counted by $2k$ different $(k/2)$-tuples, so the number of copies of $C_k$ is at most
	\[\left(\frac{1}{2k}\right)(t-1)^{k/2}n^{k/2}.\]

	{\bf Case 2}: $k$ is odd.
	Fix a $((k+1)/2)$-tuple $(x_1,x_2,\dots, x_{(k-3)/2}, y, z)$ of distinct vertices such that $yz$ is an edge.
	This can be done in at most \[
	2e(G)n^{(k-3)/2} \leq \left(1 +o(1)\right)(t-1)^{1/2}n^{3/2}n^{(k-3)/2} = \left(1 +o(1)\right)(t-1)^{1/2}n^{k/2}\]
	 ways by (\ref{k2t-bound}).
	We count the number of cycles $v_1v_2\cdots v_kv_1$ such that $x_i = v_{2i}$ for $1 \leq i \leq (k-3)/2$, $y = v_{k-1}$, and $z = v_{k}$. Similar to Case 1, as $G$ is $K_{2,t}$-free, there are at most $t-1$ choices for each of the $(k-1)/2$ remaining vertices $v_{2i+1}$ of the cycle.
	Each cycle $v_1v_2\cdots v_kv_1$ is counted by $2k$ different $((k+1)/2)$-tuples, so the number of copies of $C_k$ is at most
	\[\frac{1}{2k}(t-1)^{(k-1)/2} \left(1 +o(1)\right)(t-1)^{1/2}n^{k/2} = \left(\frac{1}{2k} +o(1)\right)(t-1)^{k/2}n^{k/2}.\]

	For the upper bound on $\ex(n,P_k,K_{2,t})$ we fix a tuple of distinct vertices of $G$ as above.
	We sketch the proof and leave the remaining details to the reader.
	If $k$ is odd we fix a $((k+1)/2)$-tuple $(x_1,x_2,\dots, x_{(k+1)/2})$ and if
	$k$ is even we fix a $((k+2)/2)$-tuple $(x_1,x_2,\dots, x_{(k-2)/2},y,z)$ such that $yz$ is an edge.
	In both cases we count the paths $v_1v_2 \cdots v_k$ such that $x_i = v_{2i-1}$ and with the additional conditions that $y = v_{k-1}$, and $z=v_k$ in the case $k$ even. Similar to the case for cycles there are at most $t-1$ choices for each of the remaining vertices of the path.
	Each path is counted exactly two times in this way.

	Both lower bounds are given by the F\"uredi graph $F_{q,t}$ for $q$ large enough compared to $t$ and $k$. We begin by counting copies of the path $P_k =v_1v_2\cdots v_k$ greedily. The vertex $v_1$ can be chosen in $n$ ways. As the F\" uredi graph $F_{q,t}$ has minimum degree $q-1$, we can pick vertex $v_i$ (for $i>1$) in at least $q-i+1$ ways. Each path is counted twice in this way,
	therefore, we have at least
	\[
	\frac{1}{2}n(q-k+1)^{k-1} = \left(\frac{1}{2} + o(1)\right)(t-1)^{(k-1)/2}n^{(k+1)/2}
	\]
	paths of length $k$ in the F\"uredi graph $F_{q,t}$.	
	
	For counting copies of the cycle $C_k = v_1v_2 \cdots v_kv_1$ we proceed as above with the addition that $v_k$ should be adjacent to $v_1$.
	In order to do this, we pick $v_1$ arbitrarily and $v_2,\dots, v_{k-3}$ greedily as in the case of paths. As $k \geq 5$ the vertex $v_{k-3}$ is distinct from $v_1$.
	From the neighbors of $v_{k-3}$ we pick $v_{k-2}$ that is not adjacent to $v_1$. The number of choices for $v_{k-2}$ is at least $q-k+3-(t-1)$ as $v_{k-3}$ and $v_1$ have at most $t-1$ common neighbors.
	From the neighbors of $v_{k-2}$ we pick $v_{k-1}$ that is not adjacent to any of the vertices $v_1,\dots, v_{k-3}$. Each $v_i$ has at most $t-1$ common neighbors with $v_{k-2}$ which forbids at most $(k-3)(t-1)$ vertices as a choice for $v_{k-1}$. Therefore, we have at least $q-k-2-(k-3)(t-1)$ choices for $v_{k-1}$. 
	
	Since $v_{k-1}$ is not joined to $v_1$ by an edge they have $t-1$ common neighbors and none of these neighbors are among $v_1,v_2,\dots, v_{k-1}$. Hence we can pick any of the common neighbors as $v_{k}$. Every copy of $C_k$ is counted $2k$ times, thus altogether we have at least 
	\[
	\frac{1}{2k}n(q-t(k-3))^{k-2}(t-1)=\left(\frac{1}{2k}+o(1)\right)(t-1)^{k/2}n^{k/2}
	\]
	 copies of $C_k$.
\end{proof}

\section{Linearity of the function $\ex(n,C_k,F)$}\label{linear section}

Recall that Alon and Shikhelman \cite{alonsik} characterized the graphs $F$ with $\ex(n,K_3,F) = O(n)$.
For trees they also essentially answer the question by determining the order of magnitude of $\ex(n,T,F)$ where both $T$ and $F$ are trees. One can easily see that their proof extends to the case when $F$ is a forest. On the other hand, if $F$ contains a cycle and $T$ is a tree, then $\ex(n,F)$ is superlinear and $\ex(n,T)$ is linear. Thus by Proposition \ref{observ1} we have that $\ex(n,T,F)$ is superlinear.

	\begin{figure}[h]
	\begin{center}
		\includegraphics[scale=1.3]{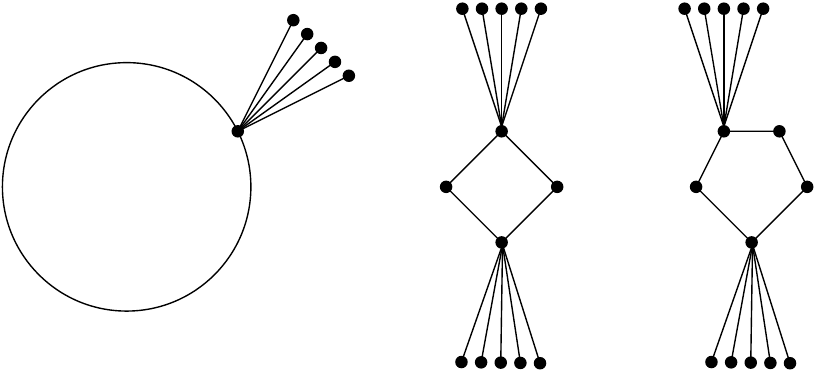}
	\end{center}
	\caption{The graphs $C_k^{*r}$, $C_4^{**r}$ and, $C_5^{**r}$}
	\label{c-star}
\end{figure}

Now we turn our attention to the case when $H$ is a cycle. We begin by introducing some notation. 
Let $C_k^{*r}$ be a cycle $C_k$ with $r$ additional vertices adjacent to a vertex $x$ of the $C_k$. For $k=4$, let  $C_4^{**r}$ be a cycle $v_1v_2v_3v_4$ with $2r$ additional vertices; $r$ are adjacent to $v_1$ and $r$ are adjacent to $v_3$. Similarly, let 
$C_5^{**r}$ be a cycle $v_1v_2v_3v_4v_5$ with $2r$ additional vertices; $r$ are adjacent to $v_1$ and $r$ are adjacent to $v_3$. See Figure~\ref{c-star} for examples of these graphs.


	\begin{figure}[h]
	\begin{center}
		\includegraphics[scale=1.3]{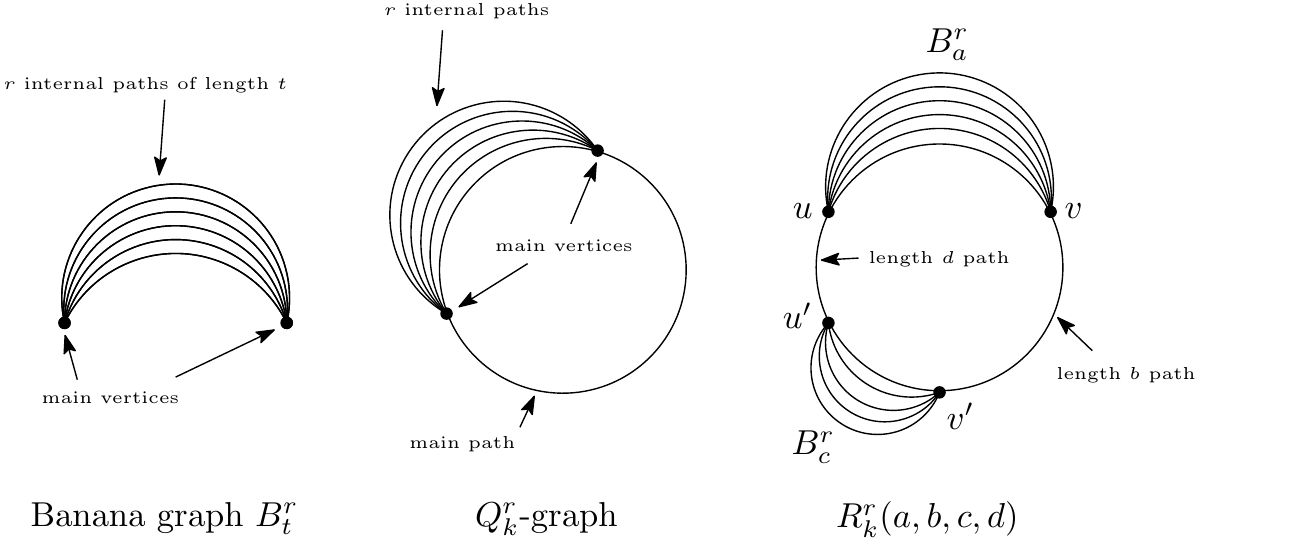}
	\end{center}
	\caption{A banana graph $B_t^r$, a $Q_k^r$-graph, and $R_k^r(a,b,c,d)$}
	\label{banana-et-al}
\end{figure}

A {\it banana graph} $B_t^r$ is the union of $r$ internally-disjoint $u$--$v$ paths of length $t$. We call the vertices $u,v$ the {\it main vertices} of $B_t^r$ and the $u$--$v$ paths of $B_t^r$ are its {\it internal paths}. 

For $t <k$, let $Q_k^r(t)$ be the graph consisting of a banana graph $B_t^r$ with main vertices $u,v$ and a $u$--$v$ path of length $k-t$ that is otherwise disjoint from $B_t^r$. Alternatively, $Q_k^r(t)$-graph is a $C_k$ with $r-1$ additional paths of length $t$ between two vertices that are joined by a path of length $t$ in the $C_k$. For simplicity, we call any graph $Q_k^r(t)$ a {\it $Q_k^r$-graph}. The {\it internal paths} and {\it main vertices} of a $Q_k^r$-graph are simply the internal paths and main vertices of the associated banana graph $B_t^r$. The {\it main path} of a $Q_k^r$-graph is the associated $u$--$v$ path of length $k-t$.
 
 For $a,c \geq 2$ and $b,d \geq 0$ such that $a+b+c+d=k$, let $R_k^r(a,b,c,d)$ be the graph formed by a copy of $B_a^r$ with main vertices $u,v$ and a copy of $B_c^r$ with main vertices $u',v'$ together with a $v$--$v'$ path of length $b$ and a $u$--$u'$ path of length $d=k-(a+b+c)$.
When $b=0$ we identify the vertices $v$ and $u'$ and when $d=0$ we identify the vertices $u$ and $v'$. Note that the last parameter $d$ is redundant, but we include it for ease of visualizing individual instances of this graph. 
For simplicity, we call any graph $R_k^r(a,b,c,d)$ an $R_k^r$-\emph{graph}.
Finally, call a graph $G$ an {\it $F_k^r$-graph} if $G$ is a forest and is a subgraph of every $R_k^r$-graph (for all permissible values of $a,b,c,d$).

We now characterize those graphs $F$ for which the function  $\ex(n,C_k,F)$ is linear.

\begin{theorem}\label{linear-main}
	For $k=4$ and $k=5$, if $F$ is a subgraph of $C_k^{**r}$ (for some $r$ large enough), then $\ex(n,C_k,F)=O(n)$.
	For $k>5$, if $F$ is a subgraph of $C_k^{*r}$ or an  ${F}_k^r$-graph (for some $r$ large enough), then $\ex(n,C_k,F)=O(n)$.
	On the other hand, for every $k>3$ and every other $F$ we have $\ex(n,C_k,F)=\Omega(n^2)$.
\end{theorem}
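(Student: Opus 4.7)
The theorem asserts a dichotomy---$\ex(n,C_k,F)$ is either $O(n)$ or $\Omega(n^2)$, with the transition determined by the listed embeddability conditions on $F$. I would prove the two directions separately: the linear upper bound for $F$ in one of the admissible families, and the quadratic lower bound for every other $F$. A convenient reformulation of the upper-bound claim is that for each fixed $r$, any $n$-vertex graph with $\geq Cn$ copies of $C_k$ (with $C=C(k,r)$ sufficiently large) must contain $C_k^{**r}$ when $k=4,5$, and must contain $C_k^{*r}$ and every $F_k^r$-graph when $k>5$.

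For the lower bound, I would construct, for each ``bad'' $F$, an explicit $n$-vertex $F$-free graph with $\Omega(n^2)$ copies of $C_k$. The principal candidates are the $R_k^r$-graphs themselves (together with $K_{2,t}$-type graphs for $k=4,5$): an $R_k^r$-graph on $O(kr)$ vertices contains $r^2$ copies of $C_k$, since each $C_k$ corresponds to a choice of one internal path from each of its two bananas together with the two connecting paths, so taking $r=\Theta(n)$ yields $\Omega(n^2)$ copies. Verifying $F$-freeness then splits into cases. If $F$ contains a cycle, one chooses an $R_k^r$-graph whose internal path lengths ensure the only cycles present have length exactly $k$ and checks $F \not\subseteq G$. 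If $F$ is a forest but not an $F_k^r$-graph, by definition there exist parameters $(a,b,c,d)$ for which $F \not\subseteq R_k^r(a,b,c,d)$ for all $r$, and that $R_k^r$-graph is the witness directly. The $k=4,5$ cases additionally use $K_{2,t}$-type or $C_k$-plus-pendants constructions when $F \not\subseteq C_k^{**r}$ for all $r$.

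For the upper bound, I would argue by contradiction: assume $G$ is $F$-free with $\geq Cn$ copies of $C_k$. By averaging, some edge $uv$ lies in many $C_k$'s, each being $uv$ together with a $u$--$v$ path of length $k-1$. A sunflower extraction from this family of paths produces $r$ internally vertex-disjoint $u$--$v$ paths of length $k-1$, yielding a banana $B_{k-1}^r$ with the edge $uv$ attached. For $k>5$ this banana-plus-edge subgraph already contains $C_k^{*r}$, and iterating the averaging inside the unused part of $G$ produces further banana-like structures that combine with the first to form an $R_k^r$-subgraph; since every $F_k^r$-graph embeds into every $R_k^r$-graph, this yields the required forest in $G$. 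For $k=4,5$, the $C_k^{**r}$ conclusion additionally demands private pendants at two opposite vertices of a chosen $C_k$, which are produced by a second averaging step using that the count of $C_k$'s remaining after removing the banana is still $\Omega(n)$.

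The principal obstacle is the sunflower-extraction step: different $C_k$'s through the fixed edge $uv$ may share interior vertices, so one must prune carefully (using pigeonhole arguments with polynomial overlap bounds) to extract enough truly internally disjoint paths. A secondary obstacle, for $k>5$, is producing a bona fide $R_k^r$-subgraph (not just a banana) from the many $C_k$'s and verifying that the definition of $F_k^r$-graph really guarantees embeddability; this is the most delicate part and motivates the precise formulation of the $F_k^r$-graph condition. On the lower-bound side, the bookkeeping across the case analysis (forest vs.\ cycle in $F$; separate handling of $k=4$ and $k=5$; interaction of the $C_k^{**r}$, $C_k^{*r}$, and $F_k^r$-graph conditions) is the main source of effort.
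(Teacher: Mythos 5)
Your overall architecture (quadratic lower bounds from $R_k^r$-graphs and $K_{2,t}$-free constructions; linear upper bounds by extracting banana-type structures from a graph with $cn$ copies of $C_k$) matches the paper, and your lower-bound sketch is essentially the paper's argument read in contrapositive. But the upper bound contains two genuine gaps. First, the ``sunflower extraction'' step is not merely delicate --- it is false as stated. From an edge $uv$ lying in many copies of $C_k$ you cannot in general extract $r$ internally disjoint $u$--$v$ paths of length $k-1$: if $u$ has a unique neighbour $w\neq v$ and there are many internally disjoint $w$--$v$ paths of length $k-2$, then every $u$--$v$ path of length $k-1$ passes through $w$, and no pruning or pigeonhole fixes this. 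The correct statement (the paper's Lemma~\ref{ugly-lemma}, proved by induction on the path length) is that one finds a banana $B_t^s$ between some \emph{intermediate} pair $u',v'$, joined to $u$ and $v$ by single paths; closing up with the rest of a $C_k$ then yields a $Q_k^{s'}$-graph. For producing $C_k^{*r}$ this relocation is harmless, but it changes what structure you actually have in hand for the later steps. (A smaller omission in the same case: when $F$ contains a cycle, $G$ need not have $O(n)$ edges, so edge-averaging does not directly give an edge in many $C_k$'s; the paper first splits on whether the minimum degree exceeds $2r+k$, building $C_k^{*r}$ or $C_k^{**r}$ greedily in the high-degree case.)

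Second, and more seriously, for $k>5$ and $F$ a forest your plan is to ``combine'' successive banana structures into an $R_k^r$-subgraph, but you give no mechanism for the combination, and the paper's proof shows that no such combination is always available: after extracting many $Q_k^{r'}$-graphs with distinct main-vertex pairs (via repeated application of Claim~\ref{repeat-claim}) and pruning them so they meet only along their main paths, the proof faces a genuine dichotomy. Either two main paths share at least two vertices --- only then does a further pigeonhole plus Lemma~\ref{ugly-lemma} assemble an $R_k^r$-subgraph --- or some two $Q_k^r$-graphs share at most one vertex, lying on both main paths, in which case no $R_k^r$-subgraph is produced at all. The second alternative is handled by a separate embedding result (Lemma~\ref{two-As}), which in turn rests on a structural catalogue of $F_k^r$-graphs (Proposition~\ref{basic-props}: at most two high-degree vertices, $c(F)\le k+4$, the extremal case forces star components, etc.). This dichotomy and the accompanying embedding lemma are the core of the argument, not a verification detail, and your proposal as written would stall exactly at the point where the two $Q_k^r$-graphs refuse to merge into one $R_k^r$-graph.
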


It is difficult to give a simple characterization of ${F}_k^r$-graphs. However, the following lemma gives some basic properties of these forests. For simplicity, the term {\it high degree} refers to a vertex of degree greater than $2$. 
A {\it star} is a single high degree vertex joined to vertices of degree $1$.
A {\it broom} is a path (possibly of a single vertex) with additional leaves attached to one of its end-vertices. 
Finally, let $c(F)$ be the sum of the number of vertices in the longest path in each component of $F$ (excluding the isolated vertex components).

	\begin{figure}[h]
	\begin{center}
		\includegraphics[scale=1.3]{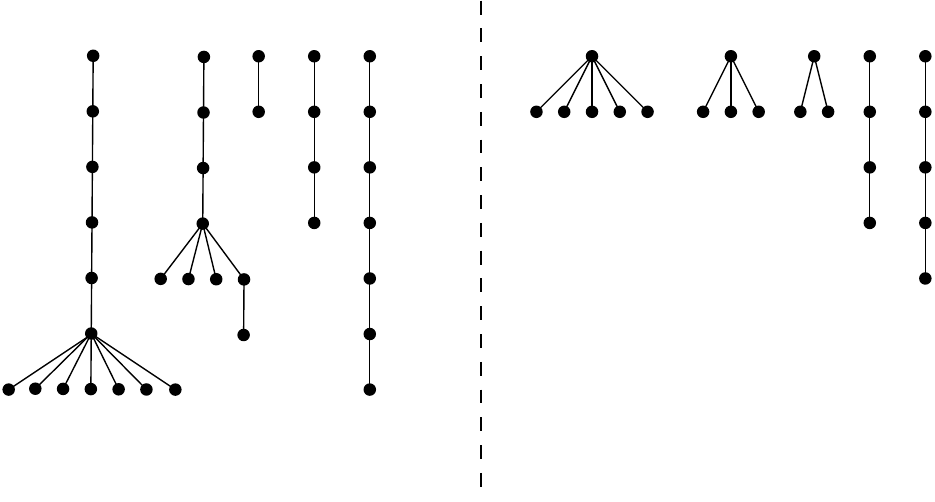}
	\end{center}
	\caption{An $F_k^r$-graph with a non-broom component and an $F_k^r$-graph $F$ with $c(F)=k+4$.}
\end{figure}

\begin{prop}\label{basic-props}
	Let $F$ be an ${F}_k^r$-graph, i.e., $F$ is a subforest of every $R_k^r$-graph. Then the following properties hold when $k>5$:
	\begin{enumerate}
		\item $F$ has at most two vertices of high degree. This implies that all but at most two components of $F$ are paths.
		
		\item Each component of $F$ has at most one vertex of high degree.
		
		\item Each vertex of high degree in $F$ is adjacent to at most two vertices of degree $2$.
		
		\item If $F$ has two high degree vertices, then at least one of them is contained in a component that is a broom.
		
		\item The number of vertices in the longest path in $F$ is at most $k$.
		
		\item $c(F) \leq k+4$.
		
		
		\item If $c(F) = k+4$, then $F$ contains three components that are stars on at least $3$ vertices. Furthermore, each component of $F$ with a high degree vertex is a star.
		
	\end{enumerate}
	
\end{prop}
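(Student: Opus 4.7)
The plan is a property-by-property contradiction argument: for each of the seven claims, assume $F$ violates it and exhibit a specific choice of $R_k^r(a,b,c,d)$ into which $F$ fails to embed, contradicting $F\in F_k^r$. Two families of test graphs do most of the work: the ``degenerate'' family $R_k^r(a,0,c,0)$, which consists of two bananas $B_a^r$ and $B_c^r$ sharing both pairs of main vertices after the identifications; and the ``cycle'' family $R_k^r(2,1,2,k-5)$, valid for $k>5$, whose four distinct main vertices $u,v,u',v'$ sit on a length-$k$ cycle with a $B_2^r$ banana on each of two short arcs. The graph $R_k^r(a,0,c,0)$ has exactly two vertices of degree exceeding two, giving property~1 immediately, and forcing every component of $F$ without a high-degree vertex to be a path. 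For property~2, vary the split $a+c=k$: every simple path between the two high-degree vertices in $R_k^r(a,0,c,0)$ has length either $a$ or $c$, so a component of $F$ containing two high-degree vertices $x,y$ would have a unique $x$--$y$ path of length $\ell$ that must lie in $\{a,c\}$ for every admissible split. When $k>5$ the intersection of these two-element sets over all admissible splits is empty, a contradiction.

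Next turn to $R_k^r(2,1,2,k-5)$ for properties~3 and~4. At each main vertex $w$, the $r$ neighbors inside a $B_2^r$ all force the third vertex of any length-$\ge 2$ branch at $w$ to the opposite main vertex of that banana, and the at most one remaining neighbor along a connecting path forces the third vertex onto the next path vertex. So at $w$ only two ``third-vertex destinations'' are available to $F$, and since distinct branches at $w$ must be vertex-disjoint beyond $w$ in the forest, $w$ admits at most two branches of length $\ge 2$, which is property~3. For property~4, if each of the two high-degree vertices $x_1,x_2$ of $F$ had two degree-$2$ neighbors, they would embed to two distinct main vertices of $R_k^r(2,1,2,k-5)$ in vertex-disjoint components; but the forced third-vertex destinations of a banana-branch at one $x_i$ coincide with the image of the other $x_{3-i}$ or with a vertex already consumed by $x_{3-i}$'s path-branch, preventing a consistent embedding of all four branches simultaneously. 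Hence at least one of $x_1,x_2$ must have a component with just one degree-$2$ neighbor, i.e., a broom.

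Properties~5,~6,~and~7 are consequences of counting vertices in vertex-disjoint unions of simple paths inside an appropriately chosen $R_k^r(a,b,c,d)$. One picks parameters so that the longest simple path in the graph has exactly $k$ vertices, yielding property~5, and so that a vertex-disjoint union of simple paths uses at most $k+4$ vertices, yielding property~6; the extra ``$+4$'' is the contribution of the four main vertices, which in the right configuration can each be ``claimed'' by a different component. Property~7 is the equality analysis of property~6: saturating $c(F)=k+4$ exhausts every available vertex slot, so any component with a high-degree vertex must be a star, since even a single internal bend of the component's longest path would consume a vertex not otherwise counted. The main obstacle lies in these last three properties: identifying the single $R_k^r(a,b,c,d)$ that simultaneously caps both the longest simple path and the disjoint-simple-path union, and carrying out the tight extremal analysis in the equality case, requires a careful balance between the banana and connecting-path lengths together with precise bookkeeping of which vertices of $R_k^r$ can be assigned to which component of $F$.
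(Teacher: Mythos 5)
Your treatment of properties 1--3 is essentially sound and close in spirit to the paper's (which tests against $R_k^r(2,0,k-2,0)$, $R_k^r(3,0,k-3,0)$ and $R_k^r(2,0,2,k-4)$), but property 4 contains a genuine error and properties 5--7 are not actually proved. For property 4 your test graph $R_k^r(2,1,2,k-5)$ does not produce the contradiction you claim: its two bananas are vertex-disjoint, sitting on the arcs $u$--$v$ and $u'$--$v'$, with $u$ and $u'$ joined by the long path of length $k-5$. Take $F$ to be two disjoint copies of the tree with a degree-$3$ center, two legs of length $2$ and one pendant leaf (this satisfies properties 1--3 and violates property 4). Map one center to $u$ and the other to $u'$, send one leg of each into its own banana (terminating at $v$ and $v'$ respectively, which are distinct and otherwise unused), send the other leg of each along the $u$--$u'$ path from its own end, and put the leaves on spare banana-internal vertices. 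For $k$ large these images are pairwise disjoint, so $F$ embeds and your claim that a banana-branch destination must ``coincide with the image of the other $x_{3-i}$ or with a vertex already consumed by $x_{3-i}$'s path-branch'' simply fails in this configuration. The feature your choice lacks, and which the paper's choice $R_k^r(2,0,2,k-4)$ supplies, is that the two bananas \emph{share} a main vertex $y$: whichever component routes a branch through its banana must occupy $y$, and then the other high-degree vertex (necessarily at the far main vertex) has all of its banana-branches forced to terminate at the already-used $y$, so that component can have only one long branch, i.e.\ it is a broom.

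For properties 5--7 you describe the intended shape of the argument but explicitly defer the choice of test graph and the extremal bookkeeping (``requires a careful balance\ldots precise bookkeeping''), which is exactly the content of these claims; as written nothing is established. The paper carries this out concretely, again with $R_k^r(2,0,2,k-4)$: after replacing each component of $F$ by its longest path, the $x$--$z$ connecting path contributes $k-3$ vertices, the components through $x$ and through $z$ each pick up at most $2$ further vertices, and the component through $y$ has at most $3$ vertices, giving $c(F)\le (k-3)+2+2+3=k+4$; equality forces three distinct components meeting $x$, $y$ and $z$, each with longest path on exactly $3$ vertices, hence stars. Note that your heuristic attribution of the ``$+4$'' to ``the four main vertices, one per component'' does not match this count: the relevant host has three high-degree vertices, not four. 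You should rebuild properties 4--7 around a single $R_k^r$-graph in which the two bananas share a main vertex, rather than around $R_k^r(2,1,2,k-5)$.
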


\begin{proof}
	The first property follows as $F$ is a subgraph of the graph $R_k^r(2,0,k-2,0)$ which has exactly two high degree vertices.
	
	For property two, consider the graphs $R_k^r(2,0,k-2,0)$ and $R_k^r(3,0,k-3,0)$. Each graph has two high degree vertices and they are at distance $2$ and $3$, respectively. If $F$ had a component with two high degree vertices, then these vertices would be at distance $2$ and $3$ simultaneously; a contradiction. Note that we use $k>5$ here.
	
	For property three, consider the graph $R_k^r(2,0,2,k-4)$. This graph contains three high degree vertices $x,y,z$ such that every vertex adjacent to $y$ is adjacent to either $x$ or $z$. If $F$ has a component with a high degree vertex adjacent to more than two vertices of degree $2$, then that component contains a cycle; a contradiction. 
	
	For property four, again consider the graph $R_k^r(2,0,2,k-4)$ and denote the three high degree vertices $x,y,z$ as before. If $F$ has two components each with a high degree vertex, then without loss of generality one of these high degree vertices is $x$. If $x$ is adjacent to two vertices of degree $2$ in $F$, then one of these vertices is $y$. Therefore, the other high degree vertex in $F$ is $z$. The component containing $z$ cannot contain $y$, so $z$ is adjacent to at most one vertex of degree $2$, i.e., the component containing $z$ is a broom.
	
	For property five, observe that the number of vertices in a longest path in $R_k^r(2,0,2,k-4)$ is $k$.
	
	For property six and seven we can assume that all the components of $F$ are paths (by deleting unnecessary leaves) and that each component contains at least two vertices.
	
	Consider again the graph $R_k^r(2,0,2,k-4)$ with high degree vertices $x,y,z$ as above. Note that this graph contains an $x$--$z$ path on $k-3$ vertices. The components of $F$ containing $x$ or $z$ have at most $2$ additional vertices not on this path. Moreover, the component of $F$ containing $y$ has at most $3$ vertices not on this path (this includes $y$ itself). Therefore, $c(F) \leq k-3+2+2+3 = k+4$. This proves property six.
	In order to achieve equality $c(F)=k+4$ there must be three distinct components containing $x$, $y$ and $z$ and each of these components has $3$ vertices in their longest path, i.e., each such component is a star. This proves property seven.
\end{proof}

The next lemma establishes another class of graphs that contains each ${F}_k^r$-graph as a subgraph.

\begin{lemma}\label{two-As}
	Let $k>5$ and $H$ be a graph formed by two ${Q}_k^r$-graphs $Q_1,Q_2$ such that $Q_1$ and $Q_2$ share at most one vertex and such a vertex is on the main path of both $Q_1$ and $Q_2$. Then each ${F}_k^r$-graph is a subgraph of $H$.
\end{lemma}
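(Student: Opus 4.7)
The plan is to use Proposition~\ref{basic-props} to reduce any $F_k^r$-graph $F$ to a controlled structural form, and then to exhibit an explicit embedding of $F$ into $H$. By the proposition, $F$ has at most two ``special'' components containing a high-degree vertex, the remaining components being paths or isolated vertices; each special component consists of a single high-degree vertex $x$ adjacent to some leaves together with at most two degree-$2$ neighbours from which paths (``arms'') extend, and the two arms of any such component have combined length at most $k-1$ by property~5. The budget $c(F)\le k+4$ from property~6 further controls the total size of $F$.

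To match this against $H$, I record that for $i\in\{1,2\}$ the graph $Q_i$ contains main vertices $u_i,v_i$ each incident to $r$ banana-path endpoints, a cycle $C^{(i)}$ of length exactly $k$ formed by the main path of $Q_i$ together with any one banana path, and $r-1$ further internally-disjoint $u_i$-$v_i$ paths of length $t_i$; since $r$ is large compared to $|V(F)|$, there is ample spare capacity at each $u_i$. My embedding proceeds as follows: if $F$ has a special component, I map its high-degree vertex to $u_1$, route its at most two arms on opposite sides of $u_1$ inside $C^{(1)}$ (which fits because the arms' combined length is at most $k-1$), and map the leaves at $x$ to spare banana-path neighbours of $u_1$; a second special component is embedded identically into $Q_2$, and property~4 guarantees that at least one of the two special components is a broom and therefore has at most one arm, simplifying that embedding. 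Each remaining path component of $F$, having at most $k$ vertices by property~5, is then placed either on an unused banana path of some $Q_i$ (when its length does not exceed $t_i$), along a still-unused cycle $C^{(i)}$, or on a path through a main vertex using two unused banana-path interiors.

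The main obstacle is the combinatorial bookkeeping in tight situations. The delicate subcases are: (i) when both $t_i$ are small so that long path components of $F$ cannot fit inside a single banana path and must be routed along main paths; (ii) when $c(F)=k+4$, where property~7 forces every special component of $F$ to be a star, collapsing all arms to length zero so that the embedding reduces to distributing leaves among spare banana-path neighbours of the $u_i$; and (iii) when $Q_1$ and $Q_2$ share a vertex on their main paths, which is easily avoided by choosing the banana path completing each $C^{(i)}$ so as to steer the embedded subgraph clear of the shared vertex. In each subcase the $r-O(1)$ unused banana paths of $Q_1$ and $Q_2$ provide redundancy far exceeding the structural demands of $F$, so the embedding can be completed.
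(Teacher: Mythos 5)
Your overall strategy---extract the spider structure of the high-degree components from Proposition~\ref{basic-props}, anchor each such component at a main vertex, and distribute the path components over the rest of $Q_1$ and $Q_2$---is the same as the paper's, but the two steps you defer to ``bookkeeping'' are exactly where the difficulty lives, and your proposed fixes do not work. The first is your subcase (iii). The shared vertex lies on the \emph{main path} of both $Q_1$ and $Q_2$, and every cycle $C^{(i)}$ you use consists of the entire main path of $Q_i$ plus one internal path; so changing which internal path completes $C^{(i)}$ never removes the shared vertex from $C^{(i)}$. Moreover, an arm of a special component, or a long path component, cannot in general be confined to the internal paths of $Q_i$: those paths all share the two endpoints $u_i,v_i$, so any path contained in their union has at most $3t_i-1$ vertices, which for $t_i=2$ is $5<k$. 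Such pieces are therefore forced onto the main paths, potentially through the shared vertex in both $Q_1$ and $Q_2$ simultaneously, making the two partial embeddings collide. This is not a removable technicality: the paper's proof spends essentially all of its case analysis on it. Its resolution is to require that everything embedded into $Q_1$ avoid the shared vertex $x$ altogether, to observe that one main vertex of $Q_1$ is joined to $x$ by a path on more than $(k+2)/2$ vertices inside $Q_1$ (so suitably small components fit on the $x$-free side), and to choose which components to peel off into $Q_1$ so that what remains has $c\le k$ and at most one high-degree vertex and hence fits into $Q_2$, where using $x$ is permitted.

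The second problem is your closing claim that the $r-O(1)$ unused internal paths ``provide redundancy far exceeding the structural demands of $F$.'' That redundancy only helps with leaves attached to a main vertex; by the computation above it does nothing for long arms or long path components when $t_i$ is small, since those must be laid along $C^{(i)}$, whose capacity is exactly $k$ vertices, which is already partly consumed by the arms of the special component anchored there, and which contains the shared vertex. What actually closes the embedding is the budget $c(F)\le k+4$ from Proposition~\ref{basic-props} combined with a case-by-case choice of which one or two components go into $Q_1$ (two stars when $c(F)=k+4$; the high-degree component whose longest path has at most $(k+3)/2$ vertices when $F$ has two high-degree vertices; a $k$-vertex path component, a path component on at least three vertices, or two single-edge components otherwise), so that the remainder satisfies $c\le k$ and fits into $Q_2$. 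You correctly identify the $c(F)=k+4$ star case, but you do not carry out this split in the remaining cases, so the proof as written has a genuine gap.
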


\begin{proof}
	Let $F$ be an ${F}_k^r$-graph. We will show that $F$ can be embedded in $H$.
	Suppose $Q_1,Q_2$ share a vertex $x$ on their main paths as this is the more difficult case.

Let $F'$ be a graph formed by components of $F$ such that $c(F') \leq k$ and there is at most one vertex of high degree in $F'$. We claim that $F'$ can be embedded into $Q_2$.  Indeed, first we embed the component of $F'$ containing the high degree vertex using a main vertex of $Q_2$. The remaining (path) components of $F'$ can be embedded into the remaining vertices of the $C_k$ in $Q_2$ greedily. Now, if we can embed components of $F$ into $Q_1$ without using the vertex $x$ such that the remaining components satisfy the conditions of $F'$ above, then we are done.

First suppose that $c(F) = k+4$. By property seven of Proposition~\ref{basic-props} let $T$ and $T'$ be distinct star components of $F$ such that $T$ has exactly $3$ vertices. It is easy to see that $T$ and $T'$ can both be embedded to $Q_1$ without using the vertex $x$. 
Therefore, the remaining components of $F$ can be embedded into $Q_2$.

We may now assume $c(F) \leq k+3$. If $F$ contains a single component, then it can be embedded into $Q_1$ by property five of Proposition~\ref{basic-props}. If $F$ has no high degree vertex, then every component is a path. In this case it is easy to embed $F$ into $Q_1$ and $Q_2$. So let us assume that $F$ contains at least two components and at least one high degree vertex.

The graph $Q_1$ has two high degree vertices. Therefore, one of them is connected to $x$ by a path $P_\ell$ with $\ell>(k+2)/2$. 

Suppose $F$ contains two high degree vertices, then let $T$ be a component containing a high degree vertex. We may assume that the number of vertices on the longest path in $T$ is at most $(k+3)/2$ (as there are two components with a high degree vertex). Therefore, we may embed $T$ into $Q_1$ without using the vertex $x$. The remaining components of $F$ can be embedded into $Q_2$.

Now suppose $F$ contains exactly one high degree vertex. If $F$ contains a component with longest path on $k$ vertices, then
it can be embedded into $Q_2$ and the remaining component of $F$ can be embedded into $Q_1$ without using vertex $x$.
 So we may assume all components in $F$ have longest paths with less than $k$ vertices. If there is a (path) component on at least three vertices, then it can be embedded into $Q_1$ without using the vertex $x$ and the remaining components of $F$ can be embedded into $Q_2$. If there is no such path, then all path components are single edges. Two such edges can be embedded into $Q_1$ without using the vertex $x$ and the remaining components can be embedded into $Q_2$ as before.
\end{proof}

 A version of the next lemma has already appeared in a slightly different form in \cite{gkpp}.

	\begin{figure}[h]
	\begin{center}
		\includegraphics[scale=1.3]{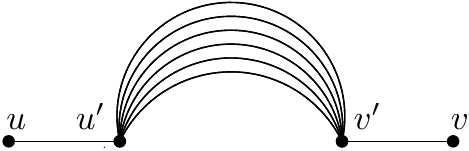}
	\end{center}
	\caption{The graph $B$ from Lemma~\ref{ugly-lemma}}
	\label{undir-figure}
\end{figure}

\begin{lemma}\label{ugly-lemma} 
	Fix integers $s \geq 2$ and $i \geq 2$. Let $G$ be a graph containing a family $\mathcal{P}$ of $(si)^{2i-2}$ $u$--$v$ paths of length $i$.
	Then $G$ contains a subgraph $B$ consisting of
	a banana graph $B_t^s$ (for some $t \leq i$) with main vertices $u',v'$ together with a $u$--$u'$ path and and $v'$--$v$ path that are disjoint from each other and otherwise disjoint from $B$ (we allow that the additional paths be of length $0$, i.e., $u=u'$ and $v=v'$),
	such that  each $u$--$v$ path in $B$ is a sub-path of some member of $\mathcal{P}$.
	Moreover, if each member of $\mathcal{P}$ is a sub-path of some copy of $C_k$ in $G$, then $G$ contains a $Q^{s'}_k$-graph where $s' = s-k$.
\end{lemma}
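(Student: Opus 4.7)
The plan is to prove the lemma by iteratively refining the family $\mathcal{P}$ to identify a common skeleton shared by many paths, then greedily extracting the banana and the prefix/suffix paths, and finally deducing the $Q_k^{s'}$-graph from the cycle hypothesis. The main obstacle will be the combined counting in the extraction phase, where the hypothesis $|\mathcal{P}|=(si)^{2i-2}$ must provide enough slack to survive both the refinement and a three-way internal-disjointness count during extraction.

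\textbf{Refinement.} Start with $\mathcal{Q}:=\mathcal{P}$ and a set of fixed positions $F=\{0,i\}$ with associated vertices $z_0:=u$ and $z_i:=v$. At each step, search for a position $j\in\{1,\ldots,i-1\}\setminus F$ and a vertex $w$ such that at least $|\mathcal{Q}|/(si)$ paths of $\mathcal{Q}$ have $w$ at position $j$. If such a pair exists, add $j$ to $F$ with $z_j:=w$ and restrict $\mathcal{Q}$ to those paths. Each iteration divides $|\mathcal{Q}|$ by at most $si$ and at most $i-1$ positions can ever be fixed, so upon termination $|\mathcal{Q}|\geq(si)^{i-1}$ and at every unfixed position no single vertex lies on more than $|\mathcal{Q}|/(si)$ paths of $\mathcal{Q}$.

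\textbf{Banana and prefix/suffix extraction.} Let $0=p_0<p_1<\cdots<p_m=i$ be the fixed positions. Choose $k$ so that $t:=p_{k+1}-p_k$ is as large as possible, and set $u':=z_{p_k}$, $v':=z_{p_{k+1}}$. Use a greedy argument to pick $s$ paths in $\mathcal{Q}$ whose segments between positions $p_k$ and $p_{k+1}$ are pairwise internally vertex-disjoint: each candidate middle has $t-1$ internal vertices at unfixed positions, so the max-degree bound controls how many further paths are blocked per selection. From the paths still available, choose one $P^*$ whose prefix (from $u$ to $u'$) and suffix (from $v'$ to $v$) avoid all internal vertices of the chosen middles; an analogous counting, summing over the $s(t-1)$ banana-internal vertices against the max-degree bound at unfixed prefix/suffix positions, shows that such a $P^*$ exists. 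The prefix and suffix of $P^*$ together with the banana formed by the $s$ chosen middles constitute the required subgraph $B$.

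\textbf{The $Q_k^{s-k}$-graph.} Under the added hypothesis, fix a specific banana middle and a $C_k$ containing the corresponding path in $\mathcal{P}$; call its complementary $u$--$v$ arc $\tilde P^*$, which has length $k-i$. Concatenating the reverse of the prefix, $\tilde P^*$, and the reverse of the suffix produces a $u'$--$v'$ path $R$ in $G$ of length $k-t$. Its internal vertices lie either on the prefix or suffix (hence disjoint from the banana by construction), equal $u$ or $v$ (which are not on any banana middle), or belong to the $k-i-1$ internal vertices of $\tilde P^*$; since the banana middles are pairwise internally vertex-disjoint, each of these $k-i-1$ vertices lies on at most one banana middle. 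Discarding those (at most $k-i-1\leq k$) middles leaves at least $s-k$ banana middles internally disjoint from $R$, which together with $R$ yields the required $Q_k^{s-k}$-graph.
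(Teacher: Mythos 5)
Your refine-then-extract strategy is genuinely different from the paper's proof (which inducts on $i$: a maximal internally disjoint subfamily forces a popular vertex $w$, and one recurses on the $u$--$w$ or $w$--$v$ halves), and the skeleton is sound, but two steps fail as written. First, the extraction counting does not close with the refinement threshold $|\mathcal{Q}|/(si)$. Internal disjointness of two middles is a condition on vertex sets, not on position-matched vertices: a fixed internal vertex of a chosen middle may occur at any of the $t-1$ unfixed middle positions of another path, so one selection blocks up to $(t-1)^2|\mathcal{Q}|/(si)$ candidates and $s-1$ selections block up to $(s-1)(t-1)^2|\mathcal{Q}|/(si)$, which already reaches $|\mathcal{Q}|$ for $i=3$ and $s\ge 4$ (take $t=i=3$) and overshoots badly for large $i$; the prefix/suffix count $s(t-1)(i-t)|\mathcal{Q}|/(si)$ likewise exceeds $|\mathcal{Q}|$ once $i\ge 6$. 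The repair is cheap: refine with threshold $|\mathcal{Q}|/(si^2)$ instead. After at most $i-1$ fixings you still keep $(si)^{2i-2}/(si^2)^{i-1}=s^{i-1}\ge 2$ paths, so some internal position remains unfixed and $t\ge 2$, and all blocking terms together are at most $(t-1)(i-1)|\mathcal{Q}|/i^2<|\mathcal{Q}|$. Without such a change the greedy step can run out of paths.

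Second, in the ``moreover'' part you take the $C_k$ through the member $P_0$ whose middle lies in the banana but glue its complementary arc $\tilde P^*$ to the prefix and suffix of a \emph{different} path $P^*$; nothing guarantees that the $k-i-1$ internal vertices of $\tilde P^*$ avoid the internal vertices of that prefix and suffix, so $R$ need not be a simple path (your classification of the internal vertices of $R$ checks disjointness from the banana, not simplicity of $R$). The same mixing of pieces from different members is why your $B$ does not satisfy the stated clause that \emph{each} $u$--$v$ path in $B$ is a sub-path of some member of $\mathcal{P}$. Both problems vanish if $P^*$ is one of the $s$ selected paths: choose $P_1,\dots,P_s$ so that, in addition to having pairwise internally disjoint middles, the prefix and suffix of $P_s$ avoid the internal vertices of the middles of $P_1,\dots,P_{s-1}$ (the corrected threshold absorbs this extra constraint), and then use the $C_k$ containing $P_s$ itself; its complementary arc is automatically internally disjoint from all of $P_s$, hence from the prefix, the suffix, and the middle of $P_s$, and it meets at most $k-i-1\le k$ of the other middles, which you discard exactly as you describe.
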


\begin{proof}
	We prove the first part of the lemma by induction on $i$. The statement clearly holds for $i=2$, as such a collection of paths is a banana graph. Let $i>2$ and suppose the lemma holds for smaller values of $i$.
	If there are $s$ disjoint paths of length $i$ between $u$ and $v$ then we are done. So we may assume that there are at most $s-1$ disjoint paths of length $i$ from $u$ to $v$. The union of a set of disjoint paths of length $i$ from $u$ to $v$ has at most $si$ vertices. Furthermore, every other $u$--$v$ path of length $i$ must intersect this set of vertices. Therefore, there is a vertex $w$ that is contained in at least $(si)^{2i-3}$ of these paths. This $w$ can be in different positions in those paths, but there are at least $(si)^{2i-4}$ paths where $w$ is the $(p+1)$st vertex (counting from $u$) with $1\le p<i$. Then there are either at least $(si)^{2p-2}> (sp)^{2p-2}$ sub-paths of length $p$ from $u$ to $w$ or at least $(si)^{2(i-p)-2}> (s(i-p))^{2(i-p)-2}$ sub-paths of length $i-p$ from $w$ to $v$.
	Without loss of generality, suppose there are at least $(si)^{2p-2}> (sp)^{2p-2}$ sub-paths of length $p$ from $u$ to $w$. Then, by induction on this collection of paths of length $p<i$, we find a banana graph $B_t^s$ with main vertices $u',v'$ together with a $u'$--$u$ path and a $v'$--$w$ path (that are disjoint from each other).
	As there is a path from $w$ to $v$ we have the desired subgraph $B$. 	
	
	Now it remains to show that if each member of $\mathcal{P}$ is a sub-path of some copy of $C_k$ in $G$, then $G$ contains a $Q^{s'}_k$-graph. Suppose we have a graph $B$ from the first part of the lemma. Let $C$ be a cycle of length $k$ that contains any $u$--$v$ path of length $i$ in $B$. Note that $C$ also contains a $u$--$v$ path $P$ of length $k-i$.
	The internal vertices of $P$ intersect at most $k$ of the internal paths of the banana graph $B_t^s$ in $B$. Remove these internal paths from $B_t^s$ and let $B'$ be the resulting subgraph of $B$. Now $B'$ together with $P$ forms a ${Q}_k^{s'}$-graph.
\end{proof}

\begin{proof}[Proof of Theorem~\ref{linear-main}] 
	
	First let us suppose that $F$ is a graph such that $\ex(n,C_k,F) = o(n^2)$. 
	Therefore, $F$ must be a subgraph of every graph with $\Omega(n^2)$ copies of $C_k$.  
	It is easy to see that each $R_k^r$-graph contains $\Omega(n^2)$ copies of $C_k$. Thus, $F$ is a subgraph of every ${R}_k^r$-graph.
	
	The F\"uredi graph $F_{q,2}$ does not contain a copy of $C_4$ and contains $\Omega(n^2)$ copies of $C_k$ for $k \geq 5$. Furthermore, $F_{q,3}$ contains $\Omega(n^2)$ copies of $C_4$. This follows from the proof of the lower bound in Theorem~\ref{fredi}. Therefore, when $k\geq 5$ and $r$ and $q$ are large enough, $F$ is a subgraph of $F_{q,2}$.
	When $k=4$, and $r$ and $q$ are large enough, $F$ is a subgraph of $F_{q,3}$.
	
	\begin{claim}\label{one-cycle}
		The graph $F$ contains at most one cycle and it is of length $k$.	
	\end{claim}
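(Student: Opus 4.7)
The plan is to leverage the two constraints already derived above: $F$ embeds into every $R_k^r$-graph, and $F$ embeds into the F\"uredi graph $F_{q,2}$ (respectively $F_{q,3}$ when $k=4$).

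For the cycle length, I observe that every cycle in $R_k^r(a,b,c,d)$ has length $2a$, $2c$, or $k$: the short cycles are pairs of internal paths of a single banana, and the only other cycle is the outer $C_k$ traversing one internal path of each banana together with both connecting paths. Specializing to $(a,b,c,d)=(2,0,2,k-4)$ forces any cycle $C_\ell\subseteq F$ to satisfy $\ell\in\{4,k\}$; for $k\ge 5$ the option $\ell=4$ is excluded by the $C_4$-freeness of $F_{q,2}$, and for $k=4$ the two options coincide, so in every case $\ell=k$.

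For the number of cycles I argue by contradiction: if $F$ has cyclomatic number at least $2$, then the fundamental cycles of two distinct non-tree edges of a spanning forest of $F$ give $C_1,C_2\subseteq F$ whose union has cyclomatic number exactly $2$, and by the first part both are copies of $C_k$. Writing the intersection as a single (possibly empty) connected piece, $C_1\cup C_2$ is one of $2C_k$, a figure-eight of two $C_k$'s sharing a single vertex, or a theta graph $\Theta_{p,k-p,k-p}$ for some $1\le p\le k-2$. I rule out all three possibilities inside the single graph $R_k^r(2,0,2,k-4)$ (which collapses to $B_2^{2r}$ when $k=4$). The crucial structural fact is that every $C_k$-subgraph of $R_k^r(2,0,2,k-4)$ must pass through the identified middle vertex $v=u'$ and traverse the entire connecting path, so any two $C_k$-subgraphs share at least $k-2\ge 2$ vertices; this rules out both $2C_k$ and the figure-eight at once.

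For the theta case, the only vertices of degree at least $3$ in $R_k^r(2,0,2,k-4)$ are $u,v,v'$, so the two branch vertices of any theta-subgraph must be two of these; a direct enumeration of internally disjoint paths between each of the three pairs shows the only possible theta length-multisets are $\{2,2,k-2\}$ (plus $\{2,2,2\}$ when $k=4$). Hence $\Theta_{p,k-p,k-p}$ forces $p=k-2$, and the resulting $\Theta_{k-2,2,2}$ contains two internally disjoint length-$2$ paths between the branch vertices and thus a $K_{2,2}=C_4$, contradicting $F\subseteq F_{q,2}$ when $k\ge 5$. For $k=4$ the two remaining candidates are $\Theta_{1,3,3}$, which cannot embed into the bipartite graph $B_2^{2r}$ (no odd $u$-$v$ path, and at most one internally disjoint length-$3$ path between any adjacent pair), and $\Theta_{2,2,2}=K_{2,3}$, which is excluded by $F\subseteq F_{q,3}$ since $F_{q,3}$ has at most $2$ common neighbors between any pair. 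The main obstacle is this theta enumeration: one has to verify that the rigid "middle spine" of $R_k^r(2,0,2,k-4)$ forces every theta-subgraph to use two arcs of length exactly $2$, which is precisely the structure that the $C_4$-freeness of $F_{q,2}$ then exploits.
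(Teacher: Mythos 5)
Your argument is correct, but it follows a genuinely different route from the paper's. The paper also gets the cycle-length statement from $R_k^r(2,0,2,k-4)$ together with $F_{q,2}$ (resp.\ $F_{q,3}$), so that part coincides with yours. For the uniqueness of the cycle, however, the paper splits into $k=4$, $k=5$ and $k>5$: for $k=4$ it observes that two $C_4$'s in $R_4^r(2,0,2,0)$ span a $K_{2,3}$ or $K_{2,4}$ (essentially your $\Theta_{2,2,2}$ case); for $k=5$ it uses the \emph{different} graph $R_5^r(2,0,3,0)$ to force a $C_4$ or $C_6$; and for $k>5$ it compares the possible numbers of shared vertices of two $C_k$'s under embeddings into \emph{two} graphs, $R_k^r(2,0,2,k-4)$ and $R_k^r(3,0,3,k-6)$, and derives a contradiction from the resulting incompatible counts. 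Your approach instead fixes the single graph $R_k^r(2,0,2,k-4)$, reduces to the trichotomy $2C_k$ / figure-eight / theta via fundamental cycles (and your justification that the union of two fundamental cycles has cyclomatic number exactly $2$, hence connected intersection, is sound), and kills each case with the rigidity of the three branch vertices plus $C_4$-freeness. What your version buys is robustness: the paper's $k>5$ case rests on a purely numerical comparison of intersection sizes, and the printed values ($k-3$ or $k-1$ versus $k-4$ or $k-2$) are delicate --- a careful recount gives $k-2$ or $k-1$ for the first graph, so the two sets actually meet at $k-2$ and one must additionally compare the \emph{structure} of the intersections (connected path versus path plus isolated vertex) to finish; your theta/figure-eight analysis never touches this bookkeeping and treats $k=5$ and $k>5$ uniformly. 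The only blemish in your write-up is the claim that the multiset $\{2,2,2\}$ occurs only for $k=4$: three internal paths of one banana give a $\Theta_{2,2,2}=K_{2,3}$ inside $R_k^r(2,0,2,k-4)$ for every $k$. This is harmless, since the theta you must embed has arc multiset $\{p,k-p,k-p\}$, which equals $\{2,2,2\}$ only when $k=4$, so your conclusion $p=k-2$ for $k\ge 5$ stands.
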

	
	\begin{proof}
		As $F$ is a subgraph of $R_k^r(2,0,2,k-4)$, every cycle in $F$ is of length $k$ or $4$. If $k>4$, as $F$ is a subgraph of $F_{q,2}$, it does not contain cycles of length $4$. Therefore, all cycles in $F$ are of length $k$.
		
		Suppose there is more than one copy of $C_k$ in $F$. For $k=4$, as $F$ is a subgraph of $R_4^r(2,0,2,0)$ it is easy to see that any two copies of $C_4$ in $F$ form a $K_{2,3}$ or $K_{2,4}$. This contradicts the fact that $F$ is also a subgraph of $F_{q,3}$.	For $k=5$, as $F$ is a subgraph of $R_5^r(2,0,3,0)$ it is easy to see that any two copies of $C_5$ in $F$ form a $C_4$ or $C_6$. This contradicts the fact that all cycles are of length $k$.
		For $k>5$, as $F$ is a subgraph of 
		 $R_k^r(2,0,2,k-4)$, every pair of $C_k$s in $F$ share $k-3$ or $k-1$ vertices.  
		 On the other hand, as $F$ is a subgraph of $R_k^r(3,0,3,k-6)$, every pair of 
		 $C_k$s in $F$ share $k-4$ or $k-2$ vertices; a contradiction.	
	\end{proof}

We distinguish three cases based on the value of $k$.

{\bf Case 1}: $k=4$. The graph $F$ is a subgraph of $R_{4}^r(2,0,2,0)$. 
By Claim~\ref{one-cycle}, $F$ has at most one cycle. The subgraphs of $R_{4}^r(2,0,2,0)$ with at most one cycle are clearly subgraphs of $C_4^{**2r}$.

{\bf Case 2}:  $k=5$.  The graph $F$ is a subgraph of $R_5^r(2,0,3,0)$ and therefore has at most $2$ vertices of degree greater than $2$ and they are non-adjacent. Furthermore, $F$ is a subgraph of $R_5^r(2,0,2,1)$. By Claim~\ref{one-cycle}, $F$ has at most one cycle. The subgraphs of $R_5^r(2,0,2,1)$ with at most one cycle that are simultaneously subgraphs of $R_5^r(2,0,3,0)$ are subgraphs of $C_5^{**2r}$.

{\bf Case 3}: $k>5$. 
First assume that $F$ is a forest. As every ${R}_k^r$-graph contains $\Omega(n^2)$ copies of $C_k$, each must contain $F$ as a subgraph. Therefore, $F$ is an ${F}_k^r$-graph by definition.

Now consider the remaining case when $F$ contains a cycle $C$.
As $F$ is a subgraph of $R_k^r(2,0,2,k-2)$, every edge of $F$ is incident to $C$.
If $F$ has at least two vertices of degree greater than $2$ on $C$, then as $F$ is a subgraph of both $R_k^r(2,0,k-2,0)$ and $R_k^r(3,0,k-3,0)$, we have that these two vertices should be at distance $2$ and $3$ from each other in $F$; a contradiction. Thus, there is only one vertex of degree greater than $2$ on $C$.
 Therefore, $F$ is a subgraph of $C_k^{*r}$.

This completes the first part of the proof that if $F$ is a graph such that $\ex(n,C_k,F)=O(n)$, then $F$ is as characterized in the theorem.

\bigskip

Now it remains to show that if $F$ is as characterized in the theorem, then $\ex(n,C_k,F)<cn$ for some constant $c$. 
The constants $k$ and $r$ are given by the statement of the theorem.
Fix constants $r'',r',\gamma,c',c$ in the given order such that each is large enough compared to $k$, $r$ and the previously fixed constants.

Let $G$ be a vertex-minimal counterexample, i.e, $G$ is an $n$-vertex graph with at least $cn$ copies of $C_k$ and no copy of $F$ such that $n$ is minimal. We may assume every vertex in $G$ is contained in at least $c$ copies of $C_k$, otherwise we can delete such a vertex (destroying fewer than $c$ copies of $C_k$) to obtain a smaller counterexample. 

{\bf Case 1}: $F$ contains a cycle.
Thus, for $k=4,5$ we have that $F$ is a subgraph of $C_k^{**r}$ and for $k>5$ we have that $F$ is a subgraph of $C_k^{*r}$.
If every vertex of $G$ has degree at least $2r+k$, then on any $C_k$ in $G$ we can build a copy of $C_k^{*r}$ or $C_k^{**r}$ greedily. These graphs contain $F$; a contradiction.

Now let $x$ be a vertex of degree less than $2r+k$. This implies that there is an edge $xy$ contained in at least $c/(2r+k)$ copies of $C_k$. Therefore, there are at least $c/(2r+k)$ $x$--$y$ paths of length $k-1$. 
As $c$ is large enough compared to $k$ and $r$, we may apply Lemma~\ref{ugly-lemma} to this collection of paths of length $k-1$ (each a subgraph of a $C_k$) to get a $Q^r_k$-graph $Q$. The graph $Q$ contains $C^{*r}_k$ when $k>5$ and  $C^{**r}_k$ when $k=4,5$. This implies that $G$ contains $F$; a contradiction.

{\bf Case 2}: $F$ is a forest. Note that if $k=4,5$, then $F$ is a subgraph of $C_k^{**r}$, and we are done by the same argument as in Case 1. Thus, we may  assume $k>5$.

\begin{claim}\label{repeat-claim}
	Suppose $G$ contains a collection  $\mathcal{C}$ of at least $c'n$ copies of $C_k$. 
	Then there is an integer $\ell <k$ such that $G$ contains a $Q_k^{r'}$-graph $Q$ with main vertices $x,y$ and internal paths of length $\ell$ such that less than $c'n$ members of $\mathcal{C}$ contain $x,y$ at distance $\ell$.
\end{claim}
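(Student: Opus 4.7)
The idea is to use Lemma~\ref{ugly-lemma} to produce the required $Q_k^{r'}$-graph from a sufficiently large collection of short $x$--$y$ paths supplied by $\mathcal{C}$. For each pair $(x,y)$ of vertices in $G$ and each integer $\ell\in\{2,\dots,\lfloor k/2\rfloor\}$, let $m_\ell(x,y)$ be the number of cycles in $\mathcal{C}$ that contain $x,y$ at distance $\ell$, and let $P_\ell(x,y)$ be the number of \emph{distinct} $x$--$y$ paths of length $\ell$ appearing as the $\ell$-arc of some such cycle, so that $P_\ell(x,y)\le m_\ell(x,y)$. Since every such path is a sub-path of a $C_k$ in $G$, Lemma~\ref{ugly-lemma} (with $s=r'+k$ and $i=\ell$) ensures that whenever $P_\ell(x,y)\ge ((r'+k)\ell)^{2\ell-2}$, $G$ contains a $Q_k^{r'}$-graph whose main vertices are $x,y$ and whose internal paths have length $\ell$. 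The claim therefore reduces to exhibiting a triple $(x,y,\ell)$ with $P_\ell(x,y)\ge ((r'+k)\ell)^{2\ell-2}$ and $m_\ell(x,y)<c'n$; the target range is non-empty because $c'$ is chosen much larger than $r'$ and $k$.

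The plan is to argue by contradiction. Assume every triple admitting such a $Q_k^{r'}$-graph is \emph{heavy}, i.e.\ $m_\ell(x,y)\ge c'n$, so by the $P_\ell\le m_\ell$ inequality every triple with $m_\ell(x,y)\in [((r'+k)\ell)^{2\ell-2},c'n)$ is ruled out. Double-counting pair-distance incidences yields $\sum_\ell\sum_{(x,y)}m_\ell(x,y) = \Theta(k^2|\mathcal{C}|)\ge \Theta(k^2 c'n)$. Heavy triples number at most $\binom{k}{2}|\mathcal{C}|/(c'n)$, so the pairs occurring in heavy triples lie on a small vertex set $W$. Each \emph{light} triple (with $m_\ell<c'n$, hence $P_\ell<((r'+k)k)^{2k-2}$) contributes only a bounded number of distinct $\ell$-arcs, so in total light triples account for only $O(n^2)$ distinct $(x,y,\ell,P)$ quadruples. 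When $|\mathcal{C}|$ is large enough, this light budget cannot carry the full incidence count, forcing heavy incidences to dominate and concentrating the cycles around $W$.

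The key obstacle is a light triple whose $m_\ell$ value is almost $c'n$ despite $P_\ell$ being small: a single $\ell$-arc $P$ from $x$ to $y$ may be completed into cycles of $\mathcal{C}$ in many ways, giving some $c_P$ with $c_P\gg ((r'+k)(k-\ell))^{2(k-\ell)-2}$. Then $P_{k-\ell}(x,y)\ge c_P$ is large, but $m_{k-\ell}(x,y)=m_\ell(x,y)$ is still heavy, so the pair $(x,y)$ itself remains unusable. To escape this loop I would pivot to a different main pair: by pigeonhole over the $c_P$ cycles extending $P$ and their internal vertices on the $(k-\ell)$-arcs, most such cycles pass through some vertex $z\notin W$, and one of the pairs $(x,z)$ or $(y,z)$ at a suitable distance $\ell'$ lies outside the heavy set while collecting enough distinct $\ell'$-arcs (among these many cycles) to meet the Lemma~\ref{ugly-lemma} threshold. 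That produces a non-heavy triple admitting a $Q_k^{r'}$-graph, contradicting the standing assumption. The delicate step is the pivot, namely choosing $z$ and verifying the arc count after the endpoint switch; the remaining bookkeeping is routine given the hierarchy of constants $c\gg c'\gg\gamma\gg r'\gg r,k$ prescribed in the theorem's proof setup.
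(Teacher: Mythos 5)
Your reduction rests on a misreading of Lemma~\ref{ugly-lemma}. When that lemma is applied to a family of $x$--$y$ paths of length $\ell$, the $Q_k^{r'}$-graph it produces does \emph{not} have main vertices $x,y$ and internal paths of length $\ell$: the lemma yields a banana graph $B_t^{s}$ on \emph{some other} pair of main vertices $u',v'$ at \emph{some} distance $t\le \ell$, joined to $x$ and $y$ by connecting paths. So even if you exhibit a triple $(x,y,\ell)$ with $P_\ell(x,y)$ above the threshold and $m_\ell(x,y)<c'n$, the graph $Q$ you obtain has main vertices $x',y'$ at some distance $\ell'\le\ell$ that you have not controlled, and the claim requires $m_{\ell'}(x',y')<c'n$ for \emph{that} pair. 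This is exactly the difficulty the paper's proof is built around: it takes, among all pairs lying in at least $c'n$ cycles, one whose popular arc length $i$ is \emph{minimal}, finds a popular intermediate vertex $w$ on those arcs, and applies Lemma~\ref{ugly-lemma} to the resulting $u$--$w$ subpaths of length $j<i$; whatever main vertices and internal path length $\ell\le j<i$ come out, minimality of $i$ forces them to be non-heavy. Your proposal has no mechanism playing the role of this minimality, and the ``pivot'' you sketch to escape the heavy/light loop is precisely the unresolved crux, not routine bookkeeping.

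Two further gaps. First, your counting step does not work as stated: with $|\mathcal{C}|$ possibly polynomially large, the bound of $\binom{k}{2}|\mathcal{C}|/(c'n)$ heavy triples does not confine the heavy pairs to a small vertex set $W$, so the concentration argument collapses. Second, you never use the hypothesis (in force where the claim is invoked) that $F$ is a forest, hence $e(G)=O(n)$. The paper needs this in the case where \emph{no} pair of vertices lies in $c'n$ cycles: then some single edge $uv$ lies in $\Omega(c')$ cycles, giving $\Omega(c')$ distinct $u$--$v$ paths of length $k-1$ to feed into Lemma~\ref{ugly-lemma}, and the non-heaviness of the resulting main vertices is automatic. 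Your restriction to $\ell\le\lfloor k/2\rfloor$ and your reliance on averaging over all pairs cannot reproduce this step, since in that case the average of $m_\ell(x,y)$ over pairs is $o(1)$ and yields nothing.
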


\begin{proof}
	We distinguish two cases.
	
	{\bf Case 1:} There exists two vertices $u,v$ of $G$ in at least $c'n$ members of $\mathcal{C}$.
	 Then there are at least $(c'/k)n$ members of $\mathcal{C}$ 
	 that contain a $u$--$v$ sub-path of length $i <k$. Let us suppose that $u$ and $v$ are chosen such that $i$ is minimal.
	Among these $u$--$v$ paths of length $i$ we can find a collection $\mathcal{P}$ of $(c'/k)n/(ni)\ge c'/k^2$ of them that contain some fixed vertex $w$ (different from $u$ and $v$) such that $w$ is at distance $j<i$ from $u$ in each such $u$--$v$ path.
	Applying Lemma~\ref{ugly-lemma} this collection $\mathcal{P}$ of $u$--$w$ paths of length $j$ gives a $Q_k^{r'}$-graph $Q$. Let $x,y$ be the main vertices of $Q$ and let $\ell \leq j<i$ be the length of the main paths in $Q$.
 By the minimality of $i$, there are less than $c'n$ members of $\mathcal{C}$ that contain $x,y$ at distance $\ell$.

	{\bf Case 2:} The graph $G$ does not contain two vertices in $c'n$ members of $\mathcal{C}$.
	As $G$ is $F$-free and $F$ is a forest there are at most $2|V(F)|n$  edges in $G$. Thus, there is an edge $uv$ contained in at least $c'/(2|V(F)|)$ members of $\mathcal{C}$. Let $\mathcal{P}$ be a collection of $c'/(2|V(F)|)$ $u$--$v$ paths of length $k-1$ defined by these members of $\mathcal{C}$.
	Applying Lemma~\ref{ugly-lemma} to $\mathcal{P}$ gives a $Q_k^{r'}$-graph $Q$. Let $x,y$ be the main vertices of $Q$.
	By the assumption in Case 2, the vertices $x,y$ are contained in less than $c'n$ total copies of $C_k$ in $G$. 
\end{proof}

	Now let us apply Claim~\ref{repeat-claim} repeatedly in the following way. Let $\mathcal{C}_0$ be the collection of all $cn$ copies of $C_k$ in $G$. We may apply Claim~\ref{repeat-claim} to $\mathcal{C}_0$ to find a $Q_k^{r'}$-graph $Q_1$ with main vertices $x_1,y_1$ at distance $\ell_1$ in $Q_1$. Now remove from $\mathcal{C}_0$ the copies of $C_k$ that contain $x,y$ at distance $\ell_1$ and let $\mathcal{C}_1$ be the remaining copies of $C_k$ in $\mathcal{C}_0$. Note that $|\mathcal{C}_1| \geq (c-c')n$ and that none of the copies of $C_k$ in $Q_1$ are present in $\mathcal{C}_1$.
Repeating the argument above on $\mathcal{C}_1$ in place of $\mathcal{C}_0$ gives another $Q_k^{r'}$-graph $Q_2$ with main vertices $x_2,y_2$. We can continue this argument until we have $k\gamma$ different $Q_k^{r'}$-graphs (as $c$ is large enough compared to $c'$).

A pair of vertices $x,y$ can appear as main vertices in at most $k$ of the graphs $Q_1,Q_2,\dots, Q_{k\gamma}$. Indeed, as once they appear as main vertices at distance $\ell \leq k$ in some $Q_k^{r'}$-graph we remove all copies of $C_k$ that have $x,y$ at distance $\ell$.
Therefore, there is a collection of $\gamma=k\gamma/k$ different $Q_k^{r'}$-graphs such that no two of the $Q_k^{r'}$-graphs have the same two main vertices. Let $Q_1',Q_2',\dots, Q_\gamma'$ be this collection of $Q_k^{r'}$-graphs.

The internal paths of any $Q_i'$ may share vertices with $Q_j'$ (for $j \neq i$). However, for $r'$ large enough compared to $r''$, we may remove internal paths from each of the $Q_i'$s to construct a collection of $Q_k^{r''}$-graphs $Q_1'',Q_2'',\dots, Q_\gamma''$ such that each pair $Q_i''$, $Q_j''$ only share vertices on their respective main paths (for $i \neq j$).

Now let $M_1,M_2,\dots, M_\gamma$ be the collection of main paths of the $Q_k^{r''}$-graphs $Q_1'',Q_2'',\dots, Q_\gamma''$.
If there are two paths $M_i$ and $M_j$ that share at most one vertex, then we may apply Lemma~\ref{two-As} to $Q_i''$ and $Q_j''$ to find a copy of $F$ in $G$; a contradiction.

So we may assume that each $M_i$ shares at least two vertices with each other $M_j$. Recall that  $Q_i''$ and $Q_j''$ share at most one of their main vertices. Therefore, there is a vertex $u \in M_1$ that is contained in at least $\gamma/k$ of the paths $M_2,M_3,\dots, M_\gamma$. Moreover, $u$ is the $i$th vertex in at least $\gamma/k^2$ of those paths. Each of these paths contains another vertex from $M_1$. At least $\gamma/k^3$ of them contain the same vertex $v$, and it is the $j$th vertex in at least $\gamma/k^4$ of them. Thus, there are at least $\gamma/k^4$ $u$--$v$ paths of length $|j-i|$. As $\gamma/k^4$ is large enough we may apply Lemma~\ref{ugly-lemma} to this collection of $u$--$v$ paths of length $|j-i|$ to get a subgraph $B$ consisting of a banana graph $B_t^{r}$ (for some $t <k$) with main vertices $u',v'$ together with a $u$--$u'$ path and a $v$--$v'$ path. Each $u$--$v$ path of $B$ is a sub-path of some $Q_i''$. Pick any such $Q_i''$ and take its union with $B$.
The vertices of $B$ intersect at most $kr$ internal paths of $Q_i''$. As $r''$ is large enough compared to $r$,  we may remove internal paths of $Q_i''$ that intersect the vertices of $B$ to get a graph containing an $R_k^{r}$-graph. As $F$ is a subgraph of every $R_k^r$-graph, we have that $G$ contains $F$; a contradiction.
\end{proof}

\section{Connection to Berge-hypergraphs}\label{berge section}
The problem of counting copies of a graph $H$ in an $n$-vertex $F$-free graph is closely related to the study of Berge hypergraphs. Generalizing the notion of hypergraph cycles due to Berge, the authors introduced \cite{gp1} the notion of Berge copies
of any graph. Let $F$ be a graph.  We say that a hypergraph $\mathcal{H}$ is a
{\it Berge}-$F$ if there is a 
bijection $f : E(F) \rightarrow E(\mathcal{H})$ such that 
$e \subseteq f(e)$ for every $e \in E(F)$. Note that Berge-$F$ actually denotes a class of hypergraphs. The maximum number of hyperedges in an $n$-vertex hypergraph
with no sub-hypergraph isomorphic to any Berge-$F$ is denoted $\ex(n,\textrm{Berge-}F)$.
When we restrict ourselves to $r$-uniform hypergraphs, this maximum is denoted $\ex_r(n, \textrm{Berge-}F)$.

Results of Gy\H{o}ri, Katona and Lemons \cite{GyKaLe} together with Davoodi, Gy\H{o}ri, Methuku and Tompkins \cite{DaETAL} 
give tight bounds on $\ex_r(n, \textup{Berge-}P_\ell)$.
Upper-bounds on 
$\ex_r(n,\textup{Berge-}C_{\ell})$ are given by Gy\H{o}ri and Lemons \cite{GyLe4} when $r\geq 3$. A brief survey of Tur\'an-type results for Berge-hypergraphs can be found in Subsection 5.2.2 in \cite{gp}.

An early link between counting subgraphs and Berge-hypergraph problems was established by Bollob\'as and Gy\H ori \cite{BoGy} who investigated both $\ex_3(n,\textrm{Berge-}C_5)$ and $\ex(n,K_3,C_5)$. The connection between these two parameters is also examined in two recent manuscripts \cite{danietal, coryetal}. In this section we prove two new relationships between these problems.

\begin{prop}\label{Bergecontainment} 
	Let $F$ be a graph. Then
	  \[\ex(n,K_r,F)\le \ex_r(n,\textup{Berge-}F) \le \ex(n,K_r,F)+\ex(n,F).\]
	  and
	\[\ex(n,\textup{Berge-}F)=\max_{G}  \left\{\sum_{i=0}^n \mathcal{N}(K_i,G) \right\} \leq \sum_{i=0}^n \ex(n,K_i,F)\]
where the maximum is over all $n$-vertex $F$-free graphs $G$.   
\end{prop}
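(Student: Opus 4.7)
My plan is to handle both inequality chains with a single greedy reduction: given a Berge-$F$-free hypergraph $\mathcal{H}$, I build an $F$-free graph $G$ on $V(\mathcal{H})$ by processing hyperedges in some order, and for each hyperedge $h$ I either declare $h$ to be a clique already present in the current $G$ or pick some $uv\in\binom{h}{2}\setminus E(G)$ and add it. The resulting $G$ is $F$-free because any copy of $F\subseteq G$ lets us trace each of its edges back to the distinct hyperedge that placed that edge, producing the forbidden Berge-$F$.

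For the first chain, the lower bound $\ex(n,K_r,F)\le \ex_r(n,\textup{Berge-}F)$ comes from the direct construction: take an $F$-free graph $G$ with $\ex(n,K_r,F)$ copies of $K_r$ and let $\mathcal{H}$ be the $r$-uniform hypergraph whose hyperedges are the vertex sets of these cliques; any Berge-$F$ in $\mathcal{H}$ would embed $F$ with each edge sitting inside an actual clique of $G$, giving $F\subseteq G$. For the upper bound $\ex_r(n,\textup{Berge-}F)\le \ex(n,K_r,F)+\ex(n,F)$, run the greedy on an extremal $r$-uniform $\mathcal{H}$. The hyperedges split into ``clique-covered'' ones (mapping injectively to the $K_r$'s of $G$) and ``edge-adders'' (mapping injectively to $E(G)$), yielding $|E(\mathcal{H})|\le\mathcal{N}(K_r,G)+e(G)\le \ex(n,K_r,F)+\ex(n,F)$ by $F$-freeness of $G$.

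For the second chain, the lower bound is again by direct construction: given any $F$-free $G$, the hypergraph $\mathcal{H}$ whose hyperedges are all cliques of $G$ of every size $0,1,\dots,n$ is Berge-$F$-free (since a Berge-$F$ uses only hyperedges of size $\ge 2$, each of which is a clique of $G$, so it would produce $F\subseteq G$), and has exactly $\sum_i\mathcal{N}(K_i,G)$ hyperedges. For the matching upper bound I run the greedy in two phases on an extremal Berge-$F$-free $\mathcal{H}$: phase one processes all size-$2$ hyperedges (adding each as an edge of $G$); phase two processes hyperedges of size $\ge 3$ as before. I then define $\psi\colon E(\mathcal{H})\to\{\text{cliques of }G\}$ by sending a size-$\le 1$ hyperedge or a clique-covered hyperedge $h$ to $h$ itself (as a $K_{|h|}$ in $G$), and sending an edge-adder to the $K_2$ it installed.

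The one nontrivial point --- and the reason for processing size-$2$ hyperedges first --- is proving $\psi$ injective. Collisions across cliques of different sizes are impossible, and within each size $\ge 3$ (and within the size-$\le 1$ case) $\psi$ is tautologically injective. The only danger is that a size-$2$ hyperedge $\{u,v\}$ and a phase-two edge-adder that introduced the edge $uv$ could both be sent to the same $K_2$. But by construction every size-$2$ hyperedge is already an edge of $G$ before any phase-two processing, so a phase-two edge-adder's selected pair $uv\notin E(G)$ is not itself a size-$2$ hyperedge of $\mathcal{H}$, ruling this out. Thus $\psi$ is an injection from $E(\mathcal{H})$ into the cliques of an $F$-free $G$, giving $|E(\mathcal{H})|\le\sum_{i=0}^n\mathcal{N}(K_i,G)\le\sum_{i=0}^n\ex(n,K_i,F)$ and completing the proof.
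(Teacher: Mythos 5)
Your proposal is correct and follows essentially the same route as the paper: the lower bounds come from turning cliques of an $F$-free graph into hyperedges, and the upper bounds from greedily selecting a fresh pair from each hyperedge (size-$2$ hyperedges first) to build an $F$-free graph whose edges and cliques absorb the hyperedges injectively. Your explicit map $\psi$ and the collision analysis between size-$2$ hyperedges and later edge-adders just spell out the injectivity that the paper's ordering of hyperedges is designed to guarantee.
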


\begin{proof} Given an $F$-free graph $G$, let us construct a hypergraph $\mathcal{H}$ on the vertex set of $G$ by replacing each clique of $G$ by a hyperedge containing exactly the vertices of that clique. The hypergraph $\mathcal{H}$ contains no copy of a Berge-$F$. This gives 
	$\ex(n,K_r,F)\le \ex_r(n,\textup{Berge-}F)$ and
	
	\[
	\max_{G}  \left\{\sum_{i=0}^n \mathcal{N}(K_i,G) \right\} \leq \ex(n,\textup{Berge-}F)
	\]
	where the maximum is over all $n$-vertex $F$-free graphs $G$.  
	
Given an $n$-vertex hypergraph $\mathcal{H}$ with no Berge-$F$ subhypergraph, we construct a graph $G$ on the vertex set of $\mathcal{H}$ as follows. Consider an order $h_1, \dots, h_k$ of the hyperedges of $\mathcal{H}$ such that the hyperedges of size two appear first.
We proceed through the hyperedges in order and at each step try to choose a pair of vertices in $h_i$ to be an edge in $G$. If no such pair is available, then each pair of vertices in $h_i$ is already adjacent in $G$. In this case, we add no edge to $G$. A copy of $F$ in $G$ would correspond exactly to a Berge-$F$ in $\mathcal{H}$, so $G$ is $F$-free.

 For each hyperedge $h_i$ where we did not add an edge to $G$, there is a clique on the vertices of $h_i$ in $G$. Thus, the number of hyperedges of $\mathcal{H}$ is at most the number of cliques in $G$. 
 If $\mathcal{H}$ is $r$-uniform, then each hyperedge $h_i$ of $\mathcal{H}$ corresponds to either an edge in $G$ or a clique $K_r$ on the vertices of $h_i$ (when we could not add an edge to $G$). Therefore, the number of hyperedges in $\mathcal{H}$ is at most
$\ex(n,K_r,F)+\ex(n,F)$.
\end{proof}

As in the case of traditional Tur\'an numbers we may forbid multiple hypergraphs.
In particular, 
let $\ex_r(n,\{\textup{Berge-}F_1,\textup{Berge-}F_2,\dots, \textup{Berge-}F_k\})$ denote the maximum number of hyperedges in an $r$-uniform $n$-vertex hypergraph with no subhypergraph isomorphic to any $\textup{Berge-}F_i$ for all $1 \leq i \leq k$. Similarly, $\ex(n,H,\{F_1,F_2,\dots, F_k\})$ denotes the maximum number of copies of the graph $H$ in an $n$-vertex graph that contains no subgraph $F_i$ for all $1 \leq i \leq k$.

\begin{prop}\label{lvas} For $k\ge 4$,
	\[\ex_3(n, \{\textup{Berge-}C_2, \dots, \textup{Berge-}C_k\})=\ex(n, K_3, \{ C_4,\dots, C_k\}).\]

\end{prop}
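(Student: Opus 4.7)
The plan is to establish the two inequalities by passing between an extremal $\{C_4,\dots,C_k\}$-free graph and an extremal $3$-uniform hypergraph forbidding the short Berge cycles, via the two obvious transformations.

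For the direction $\ex_3(n,\{\textup{Berge-}C_2,\dots,\textup{Berge-}C_k\})\ge \ex(n,K_3,\{C_4,\dots,C_k\})$, I would start with an extremal $\{C_4,\dots,C_k\}$-free graph $G$ and let $\mathcal{H}$ be the $3$-uniform hypergraph on $V(G)$ whose hyperedges are the vertex sets of the triangles of $G$. Then $|E(\mathcal{H})|=\mathcal{N}(K_3,G)$, so the task reduces to showing $\mathcal{H}$ contains no Berge-$C_\ell$ for $2\le\ell\le k$. A Berge-$C_\ell$ supplies distinct vertices $v_1,\dots,v_\ell$ and distinct triangles $T_i\supseteq\{v_i,v_{i+1}\}$ of $G$, so $v_1\cdots v_\ell v_1$ is a $C_\ell$ in $G$; for $4\le\ell\le k$ this is an immediate contradiction. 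The case $\ell=2$ produces two triangles on a common edge, whose third vertices together with that edge form a $C_4$. The case $\ell=3$ requires a short inspection: at least one $T_i$ differs from $\{v_1,v_2,v_3\}$ and hence has a third vertex $u\notin\{v_1,v_2,v_3\}$, and using $u$ along with $v_1,v_2,v_3$ (noting that $v_1v_2v_3$ is itself a triangle in $G$) one reads off a $C_4$.

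For the reverse direction, let $\mathcal{H}$ be an extremal $3$-uniform hypergraph with no Berge-$C_\ell$ for $2\le\ell\le k$, and take $G$ to be its $2$-shadow. Every hyperedge becomes a triangle of $G$, and distinct triples give distinct triangles, so $\mathcal{N}(K_3,G)\ge|E(\mathcal{H})|$. To finish I must show $G$ is $\{C_4,\dots,C_k\}$-free, which I would prove by strong induction on $\ell$. Given a cycle $w_1\cdots w_\ell w_1$ in $G$ with $4\le\ell\le k$, select a hyperedge $h_i$ covering each $\{w_i,w_{i+1}\}$. If the $h_i$'s are all distinct they witness a Berge-$C_\ell$, a contradiction. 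Otherwise some $h_i=h_j$ with $i\ne j$; since $|h_i|=3$ the two covered edges must be adjacent on the cycle, forcing $h_i=h_{i+1}=\{w_i,w_{i+1},w_{i+2}\}$. The resulting chord $w_iw_{i+2}$ gives a $C_{\ell-1}$ in $G$; for $\ell\ge 5$ the inductive hypothesis forbids this. For the base case $\ell=4$, one instead obtains a triangle $w_1w_3w_4$ whose three edges lie in hyperedges $h_1,h_3,h_4$, and a finite case check resolves it: if these are all distinct we get a Berge-$C_3$; if $h_1=h_3$ or $h_1=h_4$ we force a collision among the $w_i$'s; and if $h_3=h_4$, then $h_1$ and $h_3$ both contain $\{w_1,w_3\}$, giving a Berge-$C_2$.

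The main obstacle is the small case analyses at $\ell=2,3$ in the lower bound and at the base case $\ell=4$ in the upper bound; each requires extracting a concrete forbidden cycle (a $C_4$ in $G$, or a Berge-$C_2$ or Berge-$C_3$ in $\mathcal{H}$) from each local configuration in which hyperedges coincide. Past these checks, the proof is essentially the observation that on the relevant classes, the operations \emph{take triangles as hyperedges} and \emph{take the $2$-shadow} are inverse to each other.
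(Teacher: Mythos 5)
Your proposal is correct and uses essentially the same two constructions as the paper: triangles-as-hyperedges for one inequality and the $2$-shadow for the other, with the same reduction of a hypothetical cycle via the third vertex of a repeated hyperedge. The only difference is organizational — you run an induction on cycle length with an explicit base case at $\ell=4$, while the paper iteratively shortcuts incident edges lying in a common hyperedge until all edges of the cycle come from distinct hyperedges — and both arguments handle the same degenerate small cases (Berge-$C_2$, Berge-$C_3$, and $C_4$).
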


\begin{proof} 
	Let $\mathcal{H}$ be an $n$-vertex $3$-uniform hypergraph with no Berge-$C_i$ for $i=2,3,\dots,k$ and the maximum number of hyperedges.
	Consider the graph $G$ on the vertex set of $\mathcal{H}$ where a pair of vertices are adjacent if and only if they are contained in a hyperedge of $\mathcal{H}$. As $\mathcal{H}$ is $C_2$-free (i.e., each pair of hyperedges share at most one vertex) each edge of $G$ is contained in exactly one hyperedge of $\mathcal{H}$.
	
	Each hyperedge of $\mathcal{H}$ contributes a triangle to $G$. We claim that $G$ contains no other cycles of
	length $i$ for $i=3,4,5,\dots, k$. That is, $G$ contains no cycle with two edges coming from different hyperedges of $\mathcal{H}$.
	Suppose (to the contrary) that $G$ does contain such a cycle $C$.
	If two edges of $C$ come from the same hyperedge, then they are incident in $C$. Therefore, these two edges can be replaced by the edge between their disjoint endpoints (which is contained in the same hyperedge) to get a shorter cycle. We may repeat this process until we are left with a cycle such that each edge comes from a different hyperedge of $\mathcal{H}$. Then this cycle corresponds exactly to a Berge-cycle of at most $k$ hyperedges in $\mathcal{H}$; a contradiction. Thus, $\ex_3(n, \{\textup{Berge-}C_2, \dots, \textup{Berge-}C_k\})\leq\ex(n, K_3, \{ C_4,\dots, C_k\})$.
	
	On the other hand, let $G$ be an $n$-vertex graph with no cycle $C_4,C_5,\dots, C_k$ and the maximum number of triangles. Construct a hypergraph $\mathcal{H}$ on the vertex set of $G$ where the hyperedges of $\mathcal{H}$ are the triangles of $G$. The graph $G$ is $C_4$-free, so each pair of triangles share at most one vertex, i.e., $\mathcal{H}$ contains no $\textup{Berge-}C_2$. If $\mathcal{H}$ contains a $\textup{Berge-}C_3$, then it is easy to see that $G$ contains a $C_4$; a contradiction.
	
	Therefore, if $\mathcal{H}$ contains any $\textup{Berge-}C_i$ for $i=4,\dots, k$, then $G$ contains a cycle $C_i$; a contradiction.
	Thus, $\ex_3(n, \{\textup{Berge-}C_2, \dots, \textup{Berge-}C_k\})\geq\ex(n, K_3, \{ C_4,\dots, C_k\})$.
\end{proof}

Alon and Shikhelman \cite{alonsik} showed that for every $k > 3$,
$
\ex(n, K_3,\{C_4,\dots, C_k\})\ge \Omega(n^{1+\frac{1}{k-1}}).
$
For $k=4$ they showed that $\ex_3(n,K_3, C_4)=(1+o(1))\frac{1}{6}n^{3/2}$. Lazebnik and Verstra\"ete \cite{LaVe} proved $\ex_3(n,\{\textup{Berge-}C_2,\textup{Berge-}C_3,\textup{Berge-}C_4\})=(1+o(1))\frac{1}{6}n^{3/2}$. By Proposition \ref{lvas} these two statements are equivalent.

\section{Acknowledgments}

The first author is supported in part by the J\'anos Bolyai Research Fellowship of the Hungarian Academy of
Sciences and the National Research, Development and Innovation Office -- NKFIH under the grants K
116769, KH 130371 and SNN 12936.}

\end{document}